\documentclass[11pt]{preprint}

\title{The positive mass theorem under a spectral scalar\\ curvature bound on spin manifolds}

\author{Xiangsheng Wang}
\institute{School of Mathematics, Shandong University, Jinan, Shandong 250100, P.R. China\\ \email{xiangsheng@sdu.edu.cn}}
\date{\today}

\usepackage[english]{babel}
\usepackage[stretch=10,final]{microtype}
\usepackage[a4paper,left=2.5cm,right=2.5cm,top=4cm,bottom=2.5cm,marginparwidth=2cm]{geometry}
\usepackage[usenames,dvipsnames]{xcolor} \usepackage{titlesec}
\usepackage{enumitem}
\usepackage[olditem,oldenum]{paralist}
\usepackage{perpage} \usepackage[super]{nth}
\usepackage{subdepth}
\usepackage{booktabs}
\usepackage{todonotes}
\usepackage{fancyhdr}
\usepackage[title]{appendix}
\usepackage[colorlinks,linkcolor=blue,bookmarksnumbered,pagebackref,final]{hyperref}
\usepackage{graphicx}
\usepackage{subcaption}
\usepackage[]{lipsum}
\usepackage[alphabetic,initials,msc-links]{amsrefs}
\usepackage{derivative}
\usepackage{marginnote}

\usepackage{mathtools}
\usepackage{amsthm}
\usepackage{amssymb}
\usepackage{slashed} \usepackage{mleftright}
\usepackage{tikz-cd}
\usepackage{accents}
\usepackage{nicematrix} 

\usepackage[breakable, theorems, skins]{tcolorbox}
\tcbset{enhanced}

\usepackage[osf,theoremfont]{newtxtext} \usepackage[upint,smallerops]{newtxmath}
\usepackage[cal=euler,scr=rsfso,frak=euler]{mathalfa}
\usepackage[T1]{fontenc}

\usepackage[normalem]{ulem}

\newlength{\enumerateparindent}
\MakePerPage{footnote}

\DeclareSymbolFont{sfoperators}{OT1}{cmss}{m}{n}
\DeclareSymbolFontAlphabet{\mathsf}{sfoperators}
\makeatletter
\renewcommand{\operator@font}{\mathgroup\symsfoperators}
\makeatother

\numberwithin{equation}{section}
\swapnumbers                            \newtheoremstyle{plain}
{2ex plus 1ex minus .2ex}   {\medskipamount}   {\slshape}  {}       {\indent\bfseries\tlfstyle} {.}         {5pt plus 1pt minus 1pt} {}          

\newtheorem{theorem}[subsubsection]{Theorem}
\newtheorem{lemma}[subsubsection]{Lemma}

\newtheorem{proposition}[subsubsection]{Proposition}

\newtheorem*{claim*}{Claim}
\newtheorem*{lemma*}{Lemma}

\newtheoremstyle{definition}
{2ex plus 1ex minus .2ex}   {\medskipamount}   {}  {}       {\indent\bfseries\tlfstyle} {.}         {5pt plus 1pt minus 1pt} {}          \theoremstyle{definition}

\newtheorem{remark}[subsubsection]{Remark}

\newtheorem*{remark*}{Remark}
\newtheorem*{assumption*}{Assumption}
\newtheorem*{example*}{Example}

\makeatletter
\let\newtitle\@title
\let\newauthor\@author
\let\newdate\@date
\makeatother

\titleformat{\section}{\normalfont\large\bfseries\tlfstyle}{\thesection}{0.6em}{}
\titleformat{\subsection}[runin]{\normalfont\normalsize\bfseries\tlfstyle}{\thesubsection}{0.4em}{}
\titleformat{\subsubsection}[runin]{\normalfont\normalsize\bfseries\tlfstyle}{\indent\thesubsubsection}{0.4em}{}

\usetikzlibrary{shapes,arrows}
\tikzstyle{block} = [rectangle, draw, fill=blue!20, 
    text width=5em, text centered, rounded corners, minimum height=2em]
\tikzstyle{line} = [draw, -latex']
\tikzstyle{cloud} = [draw, ellipse,fill=red!20, node distance=3cm,
    minimum height=2em]
\tikzcdset{
  arrow style=tikz,
  diagrams={>={Straight Barb[scale=0.8]}}
}

\makeatletter
\newcommand*{\transpose}{{\mathpalette\@transpose{}}}
\newcommand*{\@transpose}[2]{\raisebox{\depth}{$\m@th#1\intercal$}}
\makeatother

\newcommand{\aset}[1]{\{#1\}}

\newcommand{\vol}{\mathrm{vol}}
\newcommand{\dv}{\,d{\vol}}
\newcommand{\dA}{\,d{\mathrm{A}}}
\newcommand{\rn}[1]{\textup{\uppercase\expandafter{\romannumeral#1}}}

\newcommand{\cC}{\mathcal{C}}

\newcommand{\cO}{\mathcal{O}}

\newcommand{\rL}{\mathrm{L}}

\newcommand{\rT}{\mathrm{T}}
\newcommand{\rW}{\mathrm{W}}

\newcommand{\bZ}{\mathbb{Z}}

\newcommand{\bR}{\mathbb{R}}

\newcommand{\sfS}{\mathsf{S}}
\newcommand{\sfE}{\mathsf{E}}

\newcommand{\wD}{\mathsf{D}_{\rho}}
\newcommand{\caE}{\opc^{\sfE}}
\newcommand{\DE}{\opD^{\sfE}}
\newcommand{\wDE}{\DE_{\rho}}
\newcommand{\dwDE}{\opD^{\sfE}_{\rho,\lambda, \kappa}}

\newcommand{\Mm}{M_{\mu}}

\newcommand{\norm}[1]{\left\Vert#1\right\Vert}

\renewcommand{\Re}{\operatorname{Re}}

\renewcommand{\emptyset}{\varnothing}

\DeclareMathOperator{\supp}{supp}

\DeclareMathOperator{\dist}{dist}

\DeclareMathOperator{\opc}{c}

\DeclareMathOperator{\opA}{A}
\DeclareMathOperator{\opD}{D}

\DeclareMathOperator{\opP}{P}

\DeclareMathOperator{\opB}{B}

\DeclareMathOperator{\opQ}{Q}

\DeclareMathOperator{\opV}{V}

\DeclareMathOperator{\scv}{scal}

\begin{document}
\maketitle
\begin{abstract}
  On spin manifolds, we give a proof of Brendle and Wang's refined positive mass theorem using the Dirac operator method.
  In the course of this proof, we discuss the origin of the special coefficient appearing in Brendle and Wang's spectral positivity condition for scalar curvature from the perspective of spin geometry.
\end{abstract}

\section{Introduction}
\subsection{}
Ever since the groundbreaking works of Schoen and Yau~\cite{Schoen_1979pp} and Witten~\cite{Witten_1981ne}, the positive mass theorem (PMT) has been a central topic in modern differential geometry.
Recently, the study of the PMT has undergone a wave of new developments.
Firstly, in~\cite{Bi_2026pr}, the authors prove the PMT up to dimension $19$ using a novel blow-up technique.
Shortly after that, in~\cite{Brendle_2026dh}, the authors prove the PMT for arbitrary dimension by combining the techniques of~\cite{Bi_2026pr} with some new ideas.
One of the key discoveries in~\cite{Brendle_2026dh} is a intriguing new spectral condition on scalar curvature.
However, the geometric meaning of this condition remains somewhat mysterious.
In this paper, we aim to investigate this condition from the spin geometry perspective.
As we will see, such a condition naturally emerges when obtaining the refined PMT on spin manifolds.

\subsection{}
We first review the spectral condition defined in~\cite{Brendle_2026dh}.
Let $(M,g)$ be a complete Riemannian manifold of dimension $n\ge 3$, containing an asymptotically Euclidean (AE) end $E_0$.
More precisely, $E_0$ is diffeomorphic to the complement of the unit ball in $\bR^n$.
Moreover, as in~\cite{Brendle_2026dh}, we assume that there exist real numbers $\alpha$ and $\delta > 0$ such that\footnote{In this paper, unless explicitly stated otherwise, the pointwise norm is calculated with respect to $g$.}
\begin{equation}
  \label{eq:def-g}
  | \bar{\nabla}^m(g - (1 + \alpha r^{2-n}) \bar{g})|_{\bar{g},x} \le C(m)r^{2-n-m- 2\delta}(x)
\end{equation}
holds at each point $x\in E_0$ for $m \in \bZ_{\ge 0}$, where $\bar{g}$ is the Euclidean metric on $E_0$, $\bar{\nabla}$ is the Levi-Civita connection of $\bar{g}$, and $r$ is the radial coordinate function on $E_0$ with respect to $\bar{g}$.

Let $\rho > 0$ be a smooth weight function on $M$ such that there exist real numbers $\beta$,
\begin{equation}
  \label{eq:def-rho}
  | \bar{\nabla}^m(\rho - (1 + \beta r^{2-n})) |_{\bar{g},x} \le C(m)r^{2-n-m- 2\delta}(x)
\end{equation}
holds at each point $x\in E_0$ for $m \in \bZ_{\ge 0}$.

We say that the scalar curvature $\scv$ of $g$ is \emph{positive in the BW sense} if and only if for any smooth test function $h$ such that $h|_{E_0}$ is constant at infinity and $h|_{M - E_0}$ vanishes outside a compact subset,\footnote{In this paper, we choose the sign convention such that $-\Delta$ is a semipositive operator.}
\begin{equation}
  \label{eq:bw-pos}
  \int_{M} |\nabla h|^2 \rho \dv + \frac{1}{2} \int_{M} (\scv - 2 \Delta \log \rho - \frac{n+1}{n+2} | \nabla \log \rho|^2 ) |h|^2 \rho\dv \ge \int_M Q |h|^2 \rho \dv,
\end{equation}
where $Q$ is a positive function on $M$ such that
\begin{equation}
  \label{eq:def-Q}
  | \bar{\nabla}^mQ |_{\bar{g},x} \le C(m)r^{-n-m- 2\delta}(x)
\end{equation}
holds at each point $x\in E_0$ for $m \in \bZ_{\ge 0}$.
We say that the scalar curvature $\scv$ of $g$ is \emph{nonnegative in the BW sense} if $Q = 0$ in (\ref{eq:bw-pos}).

In our opinion, the most baffling aspect in (\ref{eq:bw-pos}) is the presence of the coefficient $\frac{n+1}{n+2}$.
We discuss its origin from the perspective of spin geometry (see Remark~\ref{rk:param}).

\subsection{}
Under the condition (\ref{eq:bw-pos}), Brendle and Wang~\cite[Theorem~1.5]{Brendle_2026dh} prove a refined version of the PMT in any dimension.
In this paper, we prove a simplified version of their result under the spin condition.\footnote{Strictly speaking, in~\cite[Theorem~1.5]{Brendle_2026dh}, the authors only prove the strictly positive case.
We are not sure whether it is trivial or not to deduce the nonnegative case from the strictly positive case.}

\begin{theorem}
  \label{thm:bw}
If $M$ is spin and the scalar curvature $\scv$ of $g$ is positive (resp.\ nonnegative) in the BW sense, $(n-1)\alpha + 2\beta > 0$ (resp.\ $(n-1)\alpha + 2\beta \ge 0$).

\end{theorem}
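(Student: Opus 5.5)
We run Witten's argument with a weighted, deformed Dirac operator whose Lichnerowicz formula reproduces the BW quadratic form. Let $\sfS$ be the spinor bundle and $\opD$ the Dirac operator. On spinors $\psi$ we consider
\[
\wD\psi=\rho^{a}\opD(\rho^{b}\psi),
\]
or equivalently $\opD+\opc(\nabla f)$ for $f=\log\rho$ together with a real parameter. We also use a twisted connection $\nabla_X\psi+\lambda X(f)\psi+\kappa\,\opc(X)\opc(\nabla f)\psi$, matching the macro $\dwDE$. The first step is a weighted Lichnerowicz/Weitzenb\"ock identity for $\int|\wD\psi|^2\rho\dv$. Completing squares, it takes the form
\[
\int|\wD\psi|^2\rho
=\int\Bigl(|\widetilde\nabla\psi|^2+\tfrac{n-1}{n}\text{-type Penrose term}\Bigr)\rho
+\tfrac14\int\bigl(\scv-2\Delta f-c|\nabla f|^2\bigr)|\psi|^2\rho
+\text{boundary terms}.
\]

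The key algebraic point is to use the twistor (Penrose) decomposition $|\nabla\psi|^2\ge\frac1n|\opD\psi|^2$ and to optimize over $\lambda,\kappa$, applied after Kato's inequality to $h=|\psi|$. We want $c=\frac{n+1}{n+2}$ to appear. Heuristically, the constant comes from minimizing a quadratic in the free parameter with coefficient $\frac{n}{n-1}$ or $\frac{n+2}{n}$ in the refined Kato inequality $|\nabla|\psi||^2\le\frac{n-1}{n}|\nabla\psi|^2$ for harmonic spinors. This should be the content of Remark~\ref{rk:param}. Concretely, we aim for the inequality
\[
\int|\nabla\psi|^2\rho\ \ge\ \int|\nabla|\psi||^2\rho+\dots
\]
and plug in the BW condition with $h=|\psi|$. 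We also need to check that $|\psi|$ is an admissible test function, by a density argument since $|\psi|$ is only Lipschitz and tends to a constant at infinity.

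Next we solve $\wD\psi=0$ with $\psi\to\psi_0$, a constant spinor of unit length, on $E_0$; on the other ends we require decay. The BW positivity (or nonnegativity) gives coercivity of the weighted quadratic form on $\psi-\psi_0$ in a weighted $W^{1,2}$ space. Lax–Milgram then yields $\psi=\psi_0+O(r^{2-n})$. In the nonnegative case we use a limiting argument with $Q_\epsilon\to0$, or coercivity modulo the gradient term via a weighted Sobolev inequality.

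Finally we compute the boundary term at infinity on large spheres. Using the expansions (\ref{eq:def-g}) and (\ref{eq:def-rho}), the standard Witten mass flux gives a multiple of $\alpha$ (the ADM mass $\propto(n-1)\alpha$ in the conformally flat model). The weight and the $\opc(\nabla f)$ terms contribute $\partial_r\rho\sim(2-n)\beta r^{1-n}$, giving a multiple of $\beta$. The total flux should be a positive constant times $\bigl((n-1)\alpha+2\beta\bigr)$, which must equal the interior integral. That integral is $\ge\int Q|\psi|^2\rho>0$, or $\ge0$ in the nonnegative case.

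The main obstacle is the algebra of the first step: choosing the exponents and parameters so that the Weitzenb\"ock remainder exactly dominates the BW form with $\frac{n+1}{n+2}$. We also need the boundary flux to come out proportional to $(n-1)\alpha+2\beta$ rather than some other combination. We would first try $a=b$ with a general twist and fix the parameters by matching both requirements simultaneously.
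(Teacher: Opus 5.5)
\textbf{There is a genuine gap, and it is where your sketch is least specific: the existence of the weighted Witten spinor.} You assert that ``BW positivity gives coercivity,'' but this does not follow. For $\wD=\rho^{-1/2}\opD\rho^{1/2}$ the weighted Lichnerowicz formula gives
$\int|\wD s|^2\rho\dv=\int\bigl(|\nabla s|^2+\frac14(\scv-2\Delta\log\rho-|\nabla\log\rho|^2)|s|^2\bigr)\rho\dv$.
This has coefficient $1$ on $|\nabla\log\rho|^2$, while the BW form only controls $\frac{n+1}{n+2}$. So the missing $\frac{1}{4(n+2)}|\nabla\log\rho|^2|s|^2$ must be extracted from $|\nabla s|^2$ for an \emph{arbitrary}, non-harmonic spinor $s$. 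There the refined Kato inequality is not available.

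What you need is a Kato-type inequality with an error term:
$|\nabla s|^2\ge\frac{1+c_0}{2}|\nabla|s||^2+\frac{1}{4(n+2)}|\nabla\log\rho|^2|s|^2-C|\wD s|^2$.
The paper obtains this from the Bei--Cecchini identity applied to $\rho^{1/2}s$, plus Young's inequality. The argument needs strict room, namely $\frac{n+1}{n+2}>\frac{n}{n+1}$. The leftover term $\frac{c_0}{2}\int|\nabla|s||^2\rho\dv$, combined with the weighted Poincar\'e inequality, is what gives coercivity, and this works even when $Q=0$. Your fallback ``limiting argument with $Q_\epsilon\to0$'' has nothing to start from in the nonnegative case.

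Your stated target $\int|\nabla\psi|^2\rho\ge\int|\nabla|\psi||^2\rho+\dots$ also cannot work pointwise. With $\Psi=\rho^{1/2}\psi$ one has $|\nabla\psi|^2-|\nabla|\psi||^2=\rho^{-1}(|\nabla\Psi|^2-|\nabla|\Psi||^2)$. This vanishes wherever $\nabla\Psi=0$, so it gains nothing on $|\nabla\log\rho|^2$. The resolution is that the Lichnerowicz formula carries $\frac14\scv$, while BW pairs $|\nabla h|^2$ with $\frac12\scv$. Hence only $\frac12|\nabla|\psi||^2$ is needed. That slack, together with the refined Kato inequality for the $\opD$-harmonic $\Psi$, gives $|\nabla\psi|^2\ge\frac12|\nabla|\psi||^2+\frac{1}{4(n+2)}|\nabla\log\rho|^2|\psi|^2$. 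Note also that $\frac{n+1}{n+2}$ is not the optimum of any quadratic: the admissible range is open.

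\textbf{The multiple-ends case is not handled.} ``On the other ends we require decay'' does not work. The theorem allows arbitrary complete ends besides $E_0$. On those ends there is no asymptotic structure, no weighted Poincar\'e inequality and no Fredholm theory. The BW condition only tests functions that vanish far out on those ends.

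The paper uses the Cecchini--Zeidler device instead:
\begin{enumerate}
\item Truncate the other ends far away to obtain $\Mm$, and double the spinor bundle.
\item Add the potential $\varphi_{\lambda,\kappa}\opV$. This is what $\lambda,\kappa$ in $\dwDE$ are; they are not connection twists.
\item Impose the local boundary condition $\caE(\nu)\opV s=-s$ on $\partial\Mm$ and prove invertibility.
\item Let $\lambda\to0$. In the strict case, an extra Rellich compactness argument keeps the $Q$-term bounded below in the limit.
\end{enumerate}
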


\subsection{} To make the idea of the proof more transparent, and in particular to track the appearance of the coefficient $\frac{n+1}{n+2}$ more easily, in Section~\ref{sec:s-end}, we first prove Theorem~\ref{thm:bw} in the case where $M$ has only one end $E_0$.
Then, in Section~\ref{sec:m-end}, we explain how to extend the proof to the general case of multiple ends.

\subsection*{Acknowledgements.}
The author is very grateful for Prof.\ Weiping Zhang's inspiring comments on the draft of this paper.
The author was partially supported by NSFC grant 12471049 and 12101361, the Project of Young Scholars of Shandong University.

\section{Proof of Theorem~\ref{thm:bw}, single end case}\label{sec:s-end}
\subsection{}
In this section, we assume that $M$ has only one end, that is, $M - E_0$ is compact.

\subsection{Some function spaces.}
We extend the radial coordinate function $r$ on $E_0$ to a smooth positive function on $M^n$.
As in~\cite[Definition~A.20]{Lee_2019ge}, given any $p\ge 1$, $s\in \bR$, we define the weighted Lebesgue space with respect to $\rho$-measure $\rL^p_s(M, \rho \dv)$ to be the space of all functions $u \in \rL^p_{\mathrm{loc}}(M)$ with finite weighted norm
\begin{equation*}
  \norm{u}^p_{\rL^p_s(\rho)} = \int_M |u|^p r^{-sp-n} \rho\dv.
\end{equation*}
And for each positive integer $k$, we define the weighted Sobolev space with respect to $\rho$-measure $\rW^{k,p}_{s}(M, \rho \dv)$ to be the space of all functions $u \in \rW^{k,p}_{\mathrm{loc}}(M)$ with finite norm
\begin{equation*}
  \norm{u}^p_{\rW^{k,p}_s(\rho)} = \sum_{i=0}^k \| \nabla^i u\|^p_{\rL^p_{s-i} (\rho)}.
\end{equation*}

By taking $\rho = 1$, the above definitions reduce to the classical Lebesgue and Sobolev space, $\rL^p_s(M)$ and $\rW^{k,p}_s(M)$.
However, by the definition of $\rho$, (\ref{eq:def-rho}), $\rho$ is a bounded function on $M$.
As a result, the $\rL^p_s (\rho)$-norm is equivalent to the $\rL^p_s$-norm and $\rL^p_s(M, \rho\dv) = \rL^p_s(M)$, $\rW^{k,p}_{s}(M, \rho \dv) = \rW^{k,p}_{s}(M)$.

We can extend the above definition to sections of a vector bundle as usual.

Using the weighted Sobolev spaces, we can extend the test functions used in (\ref{eq:bw-pos}) to a larger class.

\begin{lemma}
  \label{lm:test-f}
Assume that $h$ is a function on $M$ such that $h = v_0 + h_c$, where $v_0 \in \bR$ is constant and $h_c\in \rW^{1,2}_{-(n-2)/2}(M, \rho \dv)$.
Then, (\ref{eq:bw-pos}) also holds for $h$.
  Moreover, let $F$ be a vector bundle associated with $\rT M$\footnote{In this paper, we take $F = \sfS(M)$ or $\sfS(M) \oplus \sfS(M)$, where $\sfS(M)$ is the spinor bundle of $M$.}.
  For any section $s \in \rW^{1,2}_{\mathrm{loc}}(F)$ such that $s = s_0 + s_c$, where $s_0|_{E_0}$ is a constant section with respect to $\bar{g}$ and $s_c\in \rW^{1,2}_{-(n-2)/2}(F, \rho \dv)$, (\ref{eq:bw-pos}) also holds for $|s|$.
\end{lemma}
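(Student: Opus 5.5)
The plan is a density argument: approximate $h$ by admissible test functions, pass to the limit in each term of (\ref{eq:bw-pos}), and then treat $|s|$ as a special case via Kato's inequality.

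\textbf{Approximating $h$.} Take $h = v_0 + h_c$ with $h_c \in \rW^{1,2}_{-(n-2)/2}(M,\rho\dv)$. Since $M - E_0$ is compact in this section, and the weighted spaces coincide with the unweighted ones, I approximate $h_c$ by smooth compactly supported functions $h_k$. Here I use the standard density of $C^\infty_c$ in $\rW^{1,2}_{-(n-2)/2}(M)$, e.g.\ \cite{Lee_2019ge}, obtained by cutoff and mollification. Each $h^{(k)} = v_0 + h_k$ is then an admissible test function: it is constant at infinity on $E_0$, and the compact-support condition is automatic. So (\ref{eq:bw-pos}) holds for every $h^{(k)}$.

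\textbf{Passing to the limit.} I check each term separately.
\begin{itemize}
\item The gradient term $\int|\nabla h^{(k)}|^2\rho \dv = \int|\nabla h_k|^2\rho\dv$ converges, because $\nabla h_k \to \nabla h_c$ in $\rL^2_{-n/2}$, which is plain $\rL^2$.
\item Write $V = \scv - 2\Delta\log\rho - \frac{n+1}{n+2}|\nabla\log\rho|^2$. By (\ref{eq:def-g}) and (\ref{eq:def-rho}), $V = O(r^{-n})$, since $\scv$ of $g$ is $O(r^{-n-2\delta})$ by the decay assumptions. Likewise $Q = O(r^{-n-2\delta})$. Expanding $|v_0+h_k|^2$ gives three pieces:
  \begin{itemize}
  \item $v_0^2\int V\rho$, which is fixed and finite as long as $V\in \rL^1$. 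This is where care is needed (see the last paragraph).
  \item The cross term $2v_0\int V h_k\rho$. By Cauchy--Schwarz, $|\int V h_k| \le \|r^{n/2}V\|_{\rL^2}\,\|r^{-n/2}h_k\|_{\rL^2}$. The first factor is finite if $V = O(r^{-n-\epsilon})$. The second factor is the $\rL^2_{-(n-2)/2}$ norm, because the weight is $r^{(n-2)-n} = r^{-2}$ and $r^{-n}\le C r^{-2}$ since $r$ is bounded below.
  \item The quadratic term $\int V|h_k|^2$, which is controlled by $\sup r^2|V|\cdot\|h_k\|^2_{\rL^2_{-(n-2)/2}}$.
  \end{itemize}
\end{itemize}
All these pieces are continuous in $h_k$ in the $\rW^{1,2}_{-(n-2)/2}$ norm, and so are the corresponding $Q$-terms. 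Letting $k\to\infty$ gives (\ref{eq:bw-pos}) for $h$.

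\textbf{The section case.} For $s = s_0 + s_c$, set $h = |s|$. Kato's inequality gives $|\nabla|s|| \le |\nabla s|$ a.e., so $|s|\in\rW^{1,2}_{\mathrm{loc}}$. On $E_0$, $s_0$ is $\bar g$-constant. Its $g$-norm satisfies $|s_0|_g = v_0 + O(r^{2-n})$, with $v_0 = |s_0|_{\bar g}$, after identifying the spinor bundles of $g$ and $\bar g$ in the standard way. Its covariant derivative satisfies $|\nabla s_0| = O(r^{1-n})$, because the connection difference $\nabla - \bar\nabla$ is $O(r^{1-n})$.

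Now extend $s_0$ smoothly into the compact core and define $h_c = |s| - v_0\chi$, with $\chi$ a cutoff equal to $1$ near infinity. Then $|h_c| \le |s_c| + \big||s_0| - v_0\chi\big|$, where the second term is $O(r^{2-n})$. Since $\int r^{2(2-n)}r^{-2}\,dv$ converges at infinity for $n\ge3$, this term lies in $\rL^2_{-(n-2)/2}$. Similarly $|\nabla h_c| \le |\nabla s_c| + O(r^{1-n})$ lies in $\rL^2_{-n/2}$. Hence $h_c\in\rW^{1,2}_{-(n-2)/2}$. The constant-plus-cutoff form $v_0\chi$ is admissible, and the first part applies after absorbing $v_0(\chi-1)$, which is compactly supported, into $h_c$.

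\textbf{Main obstacle.} I expect the delicate point to be the constant term $v_0^2\int V\rho\dv$. The potential $V$ is only $O(r^{-n})$ from its leading parts, and whether its integral is finite depends on cancellation. Both $\Delta\log\rho$ and the leading part of $\scv$ are divergence-type terms of order $r^{-n}$, and $|\nabla\log\rho|^2 = O(r^{2-2n})$ is integrable. So I would argue that, for the admissible functions, the only $v_0^2$-dependence is through an expression that is identical for every $h^{(k)}$. In other words, it is the same (possibly improper, boundary-regularized) quantity already implicit in the definition (\ref{eq:bw-pos}). The approximation then only moves the $h_c$-dependent terms, which are continuous by the estimates above.
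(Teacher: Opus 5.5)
Your proposal is correct and takes essentially the same route as the paper: density of $\cC^\infty_c$ in $\rW^{1,2}_{-(n-2)/2}$ plus continuity of the functional in that norm, then reducing the section case to the function case via $|s_0| = v_0 + \cO(r^{2-n})$ and Kato's inequality. The ``main obstacle'' you flag is not real, because $r^{2-n}$ is $\bar{g}$-harmonic, so the order-$r^{-n}$ parts of $\scv$ and $\Delta\log\rho$ vanish and $\scv - 2\Delta\log\rho - \frac{n+1}{n+2}|\nabla\log\rho|^2 = \cO(r^{-n-2\delta}) + \cO(r^{2-2n})$ is integrable; this makes the constant term finite and also gives the extra decay your Cauchy--Schwarz bound on the cross term needs.
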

\begin{proof}
  We first show the function case.
  Notice that if $h_c \in \cC^{\infty}_c(M)$, by definition, (\ref{eq:bw-pos}) holds for $h$.
  Fixing $v_0$, for $f \in \rW^{1,2}_{-(n-2)/2}(M, \rho\dv)$, we define a functional $\opA$ as
  \begin{equation*}
    \opA(f) = \int_{M} |\nabla (v_0 + f)|^2 \rho \dv + \frac{1}{2} \int_{M} (\scv - 2 \Delta \log \rho - \frac{n+1}{n+2} | \nabla \log \rho|^2 - Q ) |v_0 + f|^2 \rho\dv.
  \end{equation*}
  By (\ref{eq:def-g}), (\ref{eq:def-rho}) and (\ref{eq:def-Q}) and H\"older's inequality, $\opA$ is a continuous functional with respect to the $\rW^{1,2}_{-(n-2)/2}(\rho)$-norm.
  Since $\cC^{\infty}_c(M)$ is dense in $\rW^{1,2}_{-(n-2)/2}(M, \rho\dv)$, the desired result follows from the compact support case.

For the vector bundle case, we first notice that if $s_0|_{E_0}$ is a constant section with respect to $\bar{g}$, by (\ref{eq:def-g}),
  \begin{equation}
    \label{eq:norm-s0}
    |s_0| = v_0 + \cO(r^{-(n-2)}),
  \end{equation}
  where $v_0$ is a constant.

  Next, we consider the integrability of $|s| - v_0$.
  Since $s= s_0 + s_c$,
  \begin{equation*}
    \frac{(|s| - v_0)^2}{r^2} = \frac{(|s| - |s_0|)(|s| + |s_0| - 2v_0)}{r^2} + \frac{(|s_0| - v_0)^2}{r^2}
    \le \frac{|s_c|^2 + 2|s_c|||s_0| - v_0|}{r^2} + \frac{(|s_0| - v_0)^2}{r^2}.
  \end{equation*}
Since $s_c \in \rW^{1,2}_{-(n-2)/2}(F, \rho \dv)$, together with (\ref{eq:norm-s0}), we know from the above estimate that $|s| - v_0 \in \rL^{2}_{-(n-2)/2}(M, \rho \dv)$.

  Moreover, since $s_0|_{E_0}$ is a constant section and $s_c \in \rW^{1,2}_{-(n-2)/2}(F, \rho \dv)$, by (\ref{eq:def-g}), we know $|\nabla s| \in \rL^2(M, \rho \dv)$.
  Therefore, by Kato's inequality again,
  \begin{equation*}
    \int_M|\nabla(|s| - v_0)|^2 \rho \dv = \int_M|\nabla|s||^2 \rho \dv \le \int_M|\nabla s|^2 \rho \dv < +\infty.
  \end{equation*}
  Consequently, we have verified that $|s| - v_0 \in \rW^{1,2}_{-(n-2)/2}(M, \rho \dv)$.
  Finally, applying the proved function case to $|s|$, we show that (\ref{eq:bw-pos}) also holds for $|s|$.
\end{proof}

\subsection{Idea of the proof.}\label{sub:idea}
As expected, the proof is merely another adaptation of Witten's classical work~\cite{Witten_1981ne}.
As preparation, we first recall the relation between the mass of $g$ and the quantities appearing in Theorem~\ref{thm:bw}.

Let $\sfS(M)$ be the spinor bundle of $M$ and
\begin{equation*}
  \opD \coloneqq \sum_{i=1}^n \opc(e_i)\nabla_{e_i}: \Gamma(M,\sfS(M)) \rightarrow \Gamma(M,\sfS(M))
\end{equation*}
be the Dirac operator on $M$, where $e_1,\cdots,e_n$ is a locally othornormal basis for $\rT M$ and $\opc(-)$ is the Clifford action on $\sfS(M)$.
Take $\Psi$ to be a Witten spinor.
In other words,
\begin{equation}
  \label{eq:witten-sp}
  \opD \Psi = 0 \text{ and }\Psi - \Psi_{\infty} \in \rW^{1,2}_{-(n-2)/2}(\sfS(M)),
\end{equation}
where $\Psi_{\infty}\in \Gamma(M, \sfS(M))$ and $\Psi_{\infty}|_{E_0}$ is a constant spinor of unit norm with respect to the Euclidean metric.

Since $\Psi$ is a harmonic spinor, by the Lichnerowicz formula and the divergence theorem, we have
\begin{equation}
  \label{eq:m-int}
  \int_{S_r} \frac{1}{2} \nu(|\Psi|^2) \dA = \int_{M_r} (|\nabla \Psi|^2 + \frac{1}{4} \scv |\Psi|^2) \dv,
\end{equation}
where $S_r \subseteq E_0$ is the Euclidean sphere of radius $r$, $M_r \subseteq M$ is the manifold with boundary enclosed by $S_r$, and $\nu$ is the outward-pointing unit normal vector field on $S_r$ with respect to $g$.
Following the convention in~\cite{Lee_2019ge}, the mass of $g$ is defined (or proved) to be
\begin{equation}
  \label{eq:def-mass}
  m = \frac{1}{(n-1)\omega_{n-1}} \lim_{r\rightarrow \infty} \int_{S_r} \nu(|\Psi|^2) \dA,
\end{equation}
where $\omega_{n-1}$ is the volume of the standard $(n-1)$-dimensional sphere.
With this definition of the mass, by (\ref{eq:def-g}), we can verify that
\begin{equation}
  \label{eq:alpha-m}
  m = \frac{n-2}{2}\alpha.
\end{equation}
Let $\psi = \rho^{-1/2}\Psi$.
By (\ref{eq:def-rho}), (\ref{eq:def-mass}) and (\ref{eq:alpha-m}), we have
\begin{equation}
  \label{eq:a-b}
  \frac{1}{(n-1)\omega_{n-1}} \lim_{r\rightarrow \infty} \int_{S_r} \nu(|\psi|^2) \rho \dA = m + \frac{n-2}{n-1}\beta = \frac{n-2}{2(n-1)}((n-1)\alpha + 2\beta).
\end{equation}
Therefore, to prove Theorem~\ref{thm:bw}, it suffices to show the left hand side of (\ref{eq:a-b}) is positive (resp.\ nonnegative) by using a formula similar to (\ref{eq:m-int}) and the condition (\ref{eq:bw-pos}).

However, there exists a serious gap in this outlined approach; that is, the existence of the Witten spinor $\Psi$ cannot be taken for granted in the current setting.
Recall that in Witten's proof~\cite[\S~5]{Lee_2019ge}, the existence of the Witten spinor $\Psi$ depends on the pointwise positivity of the scalar curvature.
But now we only have a spectral bound on the scalar curvature.
Consequently, in the following, we first prove the existence of $\psi$ and then construct $\Psi$ using $\psi$.

\subsection{}
Let $\wD$ be the weighted Dirac operator with respect to $\rho$,
\begin{equation}
  \label{eq:def-wD}
  \wD \coloneqq \rho^{-1/2} \opD \rho^{1/2} = \opD + \frac{1}{2}\opc(\nabla \log \rho).
\end{equation}

Recall that $\wD$ is formally self-adjoint for the Hilbert space $\rL^2(\sfS(M), \rho \dv)$.
Moreover, $\wD$ satisfies the weighted Lichnerowicz formula,
\begin{equation*}
  \wD^2 = - \Delta^{\sfS} - \nabla_{\nabla \log \rho} + \frac{1}{4} (\scv - 2 \Delta \log \rho - |\nabla \log \rho|^2).
\end{equation*}
For any smooth spinor $s\in \Gamma(M_r, \sfS(M))$, the above formula, together with the divergence theorem, gives
\begin{equation}
  \label{eq:bd-term}
  \int_{S_r} \langle s, \nabla_{\nu}s + \opc(\nu) \wD s \rangle \rho\dA = \int_{M_r} \big(|\nabla s|^2 - |\wD s|^2 + \frac{1}{4}  (\scv - 2 \Delta \log \rho - |\nabla \log \rho|^2) |s|^2 \big)\rho\dv.
\end{equation}

As we have said, we need the following existence result for a Witten-type spinor for $\wD$.

\begin{proposition}
  \label{prop:ex-w-spinor}
  There exists a smooth spinor $\psi\in \Gamma(M, \sfS(M))$ such that
  $\psi$ satisfies properties similar to Witten's spinor,
  \begin{equation*}
    \label{eq:w-witten-sp}
    \wD \psi = 0 \text{ and }\psi - \Psi_{\infty} \in \rW^{1,2}_{-(n-2)/2}(\sfS(M)).
  \end{equation*}
\end{proposition}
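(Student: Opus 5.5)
The plan is to write $\psi=\Psi_{\infty}+\phi$ and solve for $\phi\in\rW^{1,2}_{-(n-2)/2}(\sfS(M))$ variationally, following the scheme of~\cite[\S5]{Lee_2019ge}. The one new ingredient is a coercivity estimate for $\wD$ that uses only the spectral condition (\ref{eq:bw-pos}) instead of pointwise positivity of $\scv$.

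First, the decay assumptions (\ref{eq:def-g}) and (\ref{eq:def-rho}) give $\wD\Psi_{\infty}=\opD\Psi_{\infty}+\frac12\opc(\nabla\log\rho)\Psi_{\infty}=\cO(r^{1-n})$. Hence $f\coloneqq-\wD\Psi_{\infty}\in\rL^2(\sfS(M),\rho\dv)$, and $\langle f,\cdot\rangle$ is a bounded functional on $\rW^{1,2}_{-(n-2)/2}$, since $r f\in\rL^2$ and the weighted norm controls $|\phi|/r$ in $\rL^2$.

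The central step, which I expect to be the main obstacle, is the estimate
\begin{equation*}
  \int_M|\wD s|^2\rho\dv\ \ge\ c\int_M|\nabla s|^2\rho\dv \qquad\text{for all } s\in\cC^\infty_c(\sfS(M)),
\end{equation*}
with $c>0$. Combined with the weighted Hardy inequality $\int|\nabla s|^2\ge c'\int|s|^2r^{-2}$ on the AE end (Kato and $n\ge3$), and local elliptic estimates on the compact core, this gives $\|s\|_{\rW^{1,2}_{-(n-2)/2}}\le C\|\wD s\|_{\rL^2(\rho)}$.

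To prove the estimate I would start from (\ref{eq:bd-term}) with $s$ compactly supported, so the boundary term vanishes:
\begin{equation*}
  \int_M|\wD s|^2\rho\dv=\int_M\Big(|\nabla s|^2+\tfrac14(\scv-2\Delta\log\rho-|\nabla\log\rho|^2)|s|^2\Big)\rho\dv.
\end{equation*}
Next I would apply (\ref{eq:bw-pos}) to $h=|s|$, multiplied by $\frac12$, together with $|\nabla|s||\le|\nabla s|$. This leaves a leftover $\big(\frac{n+1}{4(n+2)}-\frac14\big)|\nabla\log\rho|^2|s|^2=-\frac{1}{4(n+2)}|\nabla\log\rho|^2|s|^2$, which has the wrong sign.

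To absorb it I would not use plain Kato. Instead I would split $\nabla s$ into its twistor part and its Dirac part: $|\nabla s|^2=|\mathcal{T}s|^2+\frac1n|\opD s|^2$, with $\opD s=\wD s-\frac12\opc(\nabla\log\rho)s$. I would also use the refined Kato inequality for the twistor part, and introduce a free parameter $\lambda$ by considering $\wD+\lambda\opc(\nabla\log\rho)$ or, equivalently, a modified connection $\nabla_X+a\,\opc(X)\opc(\nabla\log\rho)$. Optimizing the resulting quadratic form in $(|\wD s|,|\nabla\log\rho||s|,|\nabla s|)$ over the parameters should be exactly where $\frac{n+1}{n+2}$ appears as the critical value. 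The cross terms $\langle\opc(\nabla\log\rho)s,\wD s\rangle$ are handled by Cauchy--Schwarz, and the result is a bound of the form $(1+C)\int|\wD s|^2\ge c\int|\nabla s|^2$. I would try the twistor split first. If the constants do not close, I would fall back on an integral argument by contradiction: take a sequence with $\|\wD s_k\|\to0$ and $\|\nabla s_k\|=1$, pass to a limit using local compactness, and show the limit is a $\rW^{1,2}_{-(n-2)/2}$ spinor in $\ker\wD$ that violates the computation above.

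With coercivity in hand, I would minimize $\opA(\phi)=\int_M|\wD(\Psi_{\infty}+\phi)|^2\rho\dv$ over the completion of $\cC^\infty_c$ under $\|\wD\cdot\|_{\rL^2(\rho)}$. By the previous step this completion equals $\rW^{1,2}_{-(n-2)/2}$. Lax--Milgram, or direct convexity, gives a minimizer $\phi$ with $\wD^2(\Psi_{\infty}+\phi)=0$ weakly. Then $\eta\coloneqq\wD\psi\in\rL^2(\rho)$ satisfies $\wD\eta=0$ weakly, and by elliptic regularity $\eta$ is smooth. Testing the Euler--Lagrange equation against cutoffs of $\eta$ gives $\int|\eta|^2\rho=\lim\int\langle\wD\psi,\wD(\chi_R\eta)\rangle\rho=0$. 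This uses the fact that $\eta\in\rL^2$ in the kernel of $\wD$ has $\nabla\eta\in\rL^2$ and decays, which follows from the same coercivity estimate applied to $\chi_R\eta$. So $\wD\psi=0$, and smoothness of $\psi$ follows from elliptic regularity.

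In the nonnegative case ($Q=0$) the argument is unchanged, because coercivity is only ever needed on compactly supported $s$, where the Hardy term supplies the missing zeroth-order control.
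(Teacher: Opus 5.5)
Your architecture is the paper's: the weighted Lichnerowicz identity on compactly supported $s$, then (\ref{eq:bw-pos}) applied to $h=|s|$ with weight $\frac12$, then a refined Kato inequality with a $|\wD s|^2$ error term to absorb the leftover $-\frac{1}{4(n+2)}|\nabla\log\rho|^2|s|^2$, and finally a Lee-type argument. Your Lax--Milgram minimization is equivalent to the paper's injectivity estimate plus formal self-adjointness.

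You differ at the crux, which you leave as a plan. The paper conjugates to $\bar s=\rho^{1/2}s$, so that $\opD\bar s=\rho^{1/2}\wD s$, and applies the Bei--Cecchini identity with two tuning parameters $k,l$. The surplus it extracts is only $\frac{c_0}{2}\int|\nabla|s||^2\rho$, so it needs the weighted Poincar\'e inequality and a second pass through (\ref{eq:wD-int-eq}) to control $\nabla s$. That conjugation has the merit of tying this estimate to the later Kato step for the $\opD$-harmonic $\Psi=\rho^{1/2}\psi$.

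Your direct twistor split of $s$ itself also works, and it needs no extra parameter or modified connection. Set $t=|\mathcal{T}s|$, $e=|\opD s|$, $d=|\wD s|$, $a=|\nabla\log\rho|$. Then:
\begin{itemize}
\item $|\nabla s|^2=t^2+e^2/n$;
\item $|\nabla|s||\le\sqrt{(n-1)/n}\,t+e/n$, because the projection of $X\otimes s$ onto $\ker\opc$ has norm $\sqrt{(n-1)/n}\,|X||s|$;
\item $a|s|/2\le e+d$.
\end{itemize}
Hence
\[
|\nabla s|^2-\tfrac12|\nabla|s||^2-\tfrac{a^2|s|^2}{4(n+2)}\ \ge\ \tfrac{n+1}{2n}t^2-\tfrac{\sqrt{n-1}}{n^{3/2}}\,te+\Bigl(\tfrac{2n-1}{2n^2}-\tfrac{1+\epsilon}{n+2}\Bigr)e^2-C_\epsilon d^2 .
\]
At $\epsilon=0$ the $(t,e)$-form has discriminant $-\frac{2}{n(n+2)}<0$. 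So for small $\epsilon$ the right side is $\ge c|\nabla s|^2-C|\wD s|^2$ pointwise. This is the strengthened estimate of Remark~\ref{rk:kato-a}, obtained without its continuity argument, and integrating it gives your bound $(1+C)\norm{\wD s}^2\ge c\norm{\nabla s}^2$ at once.

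Several side claims need correcting, though none is fatal.
\begin{itemize}
\item $\frac{n+1}{n+2}$ is not the critical value. Replacing it by $\gamma$, the same form is positive definite exactly when $\gamma>\frac{n}{n+1}$, in agreement with Remark~\ref{rk:param}. It is this strict slack that lets you absorb the $d$-cross terms. At a degenerate value that step would fail, so ``critical'' would have been bad news. Your compactness fallback would not bypass this, since the limit must contradict the same pointwise inequality with $d=0$.
\item The identity $\int|\eta|^2\rho=\lim\int\langle\wD\psi,\wD(\chi_R\eta)\rangle\rho$ is false: the right side equals $\int\langle\eta,\opc(\nabla\chi_R)\eta\rangle\rho$. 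The remedy is your own last sentence. Coercivity applied to $\chi_R\eta$ gives $\norm{\chi_R\eta}_{\rL^2_{-(n-2)/2}(\rho)}\le C\norm{\opc(\nabla\chi_R)\eta}_{\rL^2(\rho)}\le CR^{-1}\norm{\eta}_{\rL^2(\rho)}\to0$, so $\eta=0$ directly.
\item Since $f=\cO(r^{1-n})$, the claim $rf\in\rL^2$ fails for $n=3,4$. You do not need it: the linear term of your functional is $2\Re\int\langle\wD\Psi_\infty,\wD\phi\rangle\rho$, which only uses $f\in\rL^2$.
\item Zeroth-order control on the compact core comes from the global weighted Poincar\'e inequality, as in the paper, not from local elliptic estimates.
\end{itemize}
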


Once Proposition~\ref{prop:ex-w-spinor} is proved, we obtain a formula from (\ref{eq:bd-term}) similar to (\ref{eq:m-int}) on the weighted manifold,
\begin{equation}
  \label{eq:m-int-w}
  \int_{S_r} \frac{1}{2} \nu(|\psi|^2) \rho\dA = \int_{M_r} \big(|\nabla \psi|^2  + \frac{1}{4}  (\scv - 2 \Delta \log \rho - |\nabla \log \rho|^2) |\psi|^2 \big)\rho\dv.
\end{equation}
Moreover, by setting
\begin{equation}
  \label{eq:def-Psi}
  \Psi = \rho^{1/2}\psi,
\end{equation}
$\Psi$ satisfies (\ref{eq:witten-sp}) due to (\ref{eq:def-rho}).
In particular, (\ref{eq:m-int})-(\ref{eq:a-b}) have been established.
Therefore, combining (\ref{eq:a-b}) with (\ref{eq:m-int-w}), to prove Theorem~\ref{thm:bw}, the remaining task is to show the right hand side of (\ref{eq:m-int-w}) is positive (resp.\ nonnegative).

See~\cite{Balfauf_2022sm,Chu_2024no} for more information on the Dirac operator and the mass on weighted manifolds.

\subsection{A Kato-type inequality.}
By the definition of $\opD$ and $\rho$, one can check that
\begin{equation}
  \label{eq:wd-isom}
  \wD: \rW^{1,2}_{-(n-2)/2}(\sfS(M),\rho \dv) \rightarrow \rL^2(\sfS(M), \rho \dv)
\end{equation}
is a well-defined bounded linear map.
Note that $\rL^2 = \rL^2_{-n/2} = \rL^2_{-(n-2)/2 - 1} $.
As preparation to prove Proposition~\ref{prop:ex-w-spinor}, we first prove a Kato-type inequality for $s\in \rW^{1,2}_{-(n-2)/2}(\sfS(M)) \cap \Gamma(M, \sfS(M))$.

Let $\bar{s} = \rho^{1/2}s$.
By~\cite[Proposition~3.3]{Bei_2026ge}, at any point $x$ where $s(x) \ne 0$, the following pointwise equality holds,
\begin{equation}
  \label{eq:bc-eq}
  | \nabla |\bar{s}||^2_x = \frac{n-1}{n} |\nabla \bar{s} |^2_x - \frac{n-1}{n}|P \bar{s}|^2_x + \frac{2}{n} \Re \langle\opD \bar{s}, \opc(\nabla \log |\bar{s}|) \bar{s}\rangle_x,
\end{equation}
where $P$ is a differential operator whose precise form is irrelevant for our purposes.
Now, let $0 < l < 1$ be a parameter to be determined.
By elementary inequalities, the above equality implies that
\begin{equation*}
  | \nabla |\bar{s}||^2_x \le \frac{n-1}{n} |\nabla \bar{s} |^2_x  + \frac{2}{n} \Re \langle\opD \bar{s}, \opc(\nabla \log |\bar{s}|) \bar{s}\rangle_x \le \frac{n-1}{n} |\nabla \bar{s} |^2_x + \frac{1}{nl} |\opD \bar{s}|^2_x + \frac{l}{n} |\nabla |\bar{s}||^2_x,
\end{equation*}
or equivalently
\begin{equation}
  \label{eq:kato-extra}
  (1 - \frac{l}{n})| \nabla |\bar{s}||^2_x \le \frac{n-1}{n} |\nabla \bar{s} |^2_x + \frac{1}{nl} |\opD \bar{s}|^2_x.
\end{equation}
Note that
\begin{equation}
  \label{eq:bs-sqrt}
  \begin{gathered}
    |\nabla \bar{s} |^2_x = \rho (\frac{1}{4}|\nabla \log \rho|^2_x |s|^2_x + \frac{1}{2} \langle\nabla \log \rho, \nabla |s|^2\rangle_x + |\nabla s|^2_x),\\
    |\nabla |\bar{s}| |^2_x = \rho (\frac{1}{4}|\nabla \log \rho|^2_x |s|^2_x + \frac{1}{2} \langle\nabla \log \rho, \nabla |s|^2\rangle_x + |\nabla |s||^2_x).
  \end{gathered}
\end{equation}
By (\ref{eq:def-wD}), (\ref{eq:kato-extra}) and (\ref{eq:bs-sqrt}), we have
\begin{equation*}
  (1 - \frac{l}{n})| \nabla |{s}||^2_x \le \frac{n-1}{n} |\nabla {s} |^2_x  - \frac{1-l}{2n} \langle\nabla \log \rho, \nabla |s|^2 \rangle_x - \frac{1-l}{4n} |\log \rho|^2_x |s|^2_x + \frac{1}{nl} |\opD_{\rho} \bar{s}|^2_x.
\end{equation*}
By elementary inequality, for $k > 0$,
\begin{equation*}
  |\langle\nabla \log \rho, \nabla |s|^2 \rangle|_x \le 2|\nabla \log \rho|_x |s|_x |\nabla|s||_x \le k |\nabla \log \rho|^2_x |s|^2_x + \frac{1}{k} |\nabla | s||^2_x.
\end{equation*}
The above two inequalities imply that
\begin{equation}
  \label{eq:key-est-s}
  |\nabla s|^2_x \ge \frac{n}{n-1} (1 - \frac{l}{n} - \frac{1-l}{2n k})|\nabla |s||^2_x + \frac{n}{n-1} (\frac{1-l}{4n} - \frac{k(1-l)}{2n}) |\nabla \log \rho|^2_x |s|^2_x - \frac{1}{(n-1)l} |\wD s|^2_x.
\end{equation}

We choose $k$ such that
\begin{equation}
  \label{eq:k-range-s}
  \begin{cases}
    \frac{n}{n-1} (1 - \frac{l}{n} - \frac{1-l}{2n k}) > \frac{1}{2},\\
    \frac{n}{n-1} (\frac{1-l}{4n} - \frac{k(1-l)}{2n}) \ge \frac{1}{4(n+2)},
  \end{cases}
\end{equation}
which is equivalent to
\begin{equation}
  \label{eq:k-ineq}
  \frac{1-l}{n - 2l +1} < k \le \frac{1}{2} - \frac{n-1}{2(1-l)(n+2)}.
\end{equation}
Therefore, $k$ exists if and only if
\begin{equation*}
  \frac{1-l}{n - 2l +1} < \frac{1}{2} - \frac{n-1}{2(1-l)(n+2)},
\end{equation*}
which is equivalent to
\begin{equation}
  \label{eq:l-ineq}
  1 < n < \frac{1}{l}.
\end{equation}
Fix $k$ and $l$ such that (\ref{eq:k-ineq}) and (\ref{eq:l-ineq}) hold.
Then, by (\ref{eq:key-est-s}), we can find a constant $c_0$ (only depending on $n$) such that
\begin{equation}
  \label{eq:pt-kato-s}
  |\nabla s|^2_x \ge \frac{1 + c_0}{2}|\nabla |s||^2_x + \frac{1}{4(n+2)}|\nabla \log \rho|^2_x |s|^2_x - \frac{1}{(n-1)l} |\wD s|^2_x.
\end{equation}
It is well known that $\nabla s = 0$ holds almost everywhere on the zero set of $s$,~\cite[Lemma~7.7]{Gilbarg_2001el}.
As a result, integrating (\ref{eq:pt-kato-s}) over $M$ yields
\begin{equation}
  \label{eq:kato-extra-int}
  \int_M|\nabla s|^2 \rho\dv \ge \frac{1 + c_0}{2}\int_M |\nabla |s||^2 \rho \dv + \int_M\Big(\frac{1}{4(n+2)}|\nabla \log \rho|^2 |s|^2 - \frac{1}{(n-1)l} |\wD s|^2 \Big) \rho\dv.
\end{equation}

\begin{remark}
  \label{rk:kato-a}
  Although we do not need it, (\ref{eq:pt-kato-s}) can be improved slightly as follows.
  We can introduce another small positive parameter $0< a < 1$, which depends only on $n$, such that the following variant of (\ref{eq:k-range-s}) holds,
  \begin{equation}
    \label{eq:k-range-s-a}
    \begin{cases}
      \frac{n}{n-1+a} (1 - \frac{l}{n} - \frac{1-l}{2n k}) > \frac{1}{2},\\
      \frac{n}{n-1+a} (\frac{1-l}{4n} - \frac{k(1-l)}{2n}) > \frac{1}{4(n+2)}.
    \end{cases}
  \end{equation}
  The same calculation shows that $k$ exists for (\ref{eq:k-range-s-a}) if and only if $n$ satisfies
  \begin{equation*}
    n_{-} < n < n_+,\text{ where } n_{\pm} = \frac{1 + l - a(1-l) \pm \sqrt{(a(1+l) - (1-l))^2 - 16al(1-l)}}{2l}.
  \end{equation*}
  Fix $n$.
  If we take $a = 0$, $l = 1/(2n)$, i.e.\ $n_- = 1$ and $n_+ = 2n$, the above inequality holds.
  In fact, the above inequality reduces to a special case of (\ref{eq:l-ineq}) in this setting.
  Based on this special case, and by the continuity of $n_{\pm}$ with respect to $a,l$, the above inequality still holds for $l=1/(2n)$ and $a$ sufficiently small.

  Fix such $a$ and $l$.
  (\ref{eq:key-est-s}) and (\ref{eq:k-range-s-a}) imply that there exists a constant $0 < c_0' < 1$ (depending only on $n$) such that a stronger form of (\ref{eq:pt-kato-s}) holds,
  \begin{equation*}
|\nabla s|^2_x \ge c_0'|\nabla s|^2 +  \frac{1}{2}|\nabla |s||^2_x + \frac{1}{4(n+2)}|\nabla \log \rho|^2_x |s|^2_x - \frac{1}{(n-1)l} |\wD s|^2_x.
  \end{equation*}
\end{remark}

\subsection{Proof of Proposition~\ref{prop:ex-w-spinor}.}
We first show that (\ref{eq:wd-isom}) is an isomorphism.

For $s\in \rW^{1,2}_{-(n-2)/2}(\sfS(M), \rho\dv) \cap \Gamma(M, \sfS(M))$, by~\cite[Corollary~5.15]{Lee_2019ge} or a direct calculation, the left hand side of (\ref{eq:bd-term}) vanishes as $r \rightarrow \infty$.
Therefore, by (\ref{eq:bd-term}),
\begin{equation}
  \label{eq:wD-int-eq}
  \int_M |\wD s|^2 \rho\dv =  \int_M\big(|\nabla s|^2 + \frac{1}{4}  (\scv - 2 \Delta \log \rho - |\nabla \log \rho|^2) |s|^2 \big)\rho\dv.
\end{equation}
By (\ref{eq:kato-extra-int}), the above equality implies that
\begin{multline}
  \label{eq:est-wD}
  (1 + \frac{1}{(n-1)l})\int_M |\wD s|^2 \rho\dv \ge \frac{c_0}{2} \int_M |\nabla |s||^2 \rho \dv \\
  + \frac{1}{2} \Big(\int_M |\nabla |s||^2 \rho \dv
  + \frac{1}{2} \int_{M} (\scv - 2 \Delta \log \rho - \frac{n+1}{n+2} | \nabla \log \rho|^2 ) |s|^2 \rho\dv \Big).
\end{multline}
Due to (\ref{eq:bw-pos}) and Lemma~\ref{lm:test-f}, the last term in the above inequality is nonnegative.
Consequently, there exists a constant $c_1> 0$ such that\footnote{In this section, unless explicitly stated otherwise, the norm $\norm{\bullet}$ is evaluated on $M$.}
\begin{equation*}
  \norm{\nabla|s|}^2_{\rL^2(\rho)} = \int_M |\nabla |s||^2 \rho \dv \le c_1 \int_M |\wD s|^2 \rho\dv = c_1\norm{\wD s}^2_{\rL^2(\rho)}.
\end{equation*}

On the other hand, by the weighted Poincar\'e inequality~\cite[Theorem~A.28]{Lee_2019ge}, there exists a constant $c_2> 0$ such that\footnote{If we apply the weighted Poincar\'e inequality directly as the form given in~\cite[Theorem~A.28]{Lee_2019ge}, the right hand side of the following inequality should be $c'\|{\nabla(\rho^{1/2}|s|)}\|^2_{\rL^2}$, from which the form presented here follows by elementary inequalities.}
\begin{equation*}
  \norm{s}^2_{\rL^{2}_{-(n-2)/2}(\rho)} \le c_2 \norm{\nabla|s|}^2_{\rL^2(\rho)}.
\end{equation*}
By the above two inequalities, we have
\begin{equation}
  \label{eq:l2-d-norm}
  \norm{s}^2_{\rL^{2}_{-(n-2)/2}(\rho)} \le c_1c_2 \norm{\wD s}^2_{\rL^2(\rho)}.
\end{equation}

Meanwhile, by (\ref{eq:def-g}) and (\ref{eq:def-rho}), there also exists a constant $c_3 > 0$ such that
\begin{equation*}
  \int_{M}(\scv - 2 \Delta \log \rho - |\nabla \log \rho|^2) |s|^2 \rho\dv \ge - c_3 \norm{ s}^2_{\rL^{2}_{-(n-2)/2}(\rho)}.
\end{equation*}
Then, by (\ref{eq:wD-int-eq}) and (\ref{eq:l2-d-norm}),
\begin{equation}
  \label{eq:h1-d-norm}
  \norm{\nabla s}^2_{\rL^2(\rho)} = \int_{M} |\nabla s|^2 \rho\dv \le (1 + \frac{c_1c_2c_3}{4})\norm{\wD s}^2_{\rL^2(\rho)}.
\end{equation}

Combining (\ref{eq:l2-d-norm}) and (\ref{eq:h1-d-norm}), we obtain the following injectivity estimate,
\begin{equation}
  \label{eq:inj-est}
  \norm{s}^2_{\rW^{1,2}_{-(n-2)/2}(\rho)} \le \norm{\nabla s}^2_{\rL^2(\rho)} + \norm{s}^2_{\rL^{2}_{-(n-2)/2}(\rho)}
\le C \norm{\wD s}^2_{\rL^2(\rho)},
\end{equation}
where $s\in \rW^{1,2}_{-(n-2)/2}(\sfS(M), \rho\dv) \cap \Gamma(M, \sfS(M))$ and $C> 0$ is a constant independent of $s$.
By passing to a limit, we know that (\ref{eq:inj-est}) also holds for any $s\in \rW^{1,2}_{-(n-2)/2}(\sfS(M),\rho \dv)$.

By (\ref{eq:inj-est}), the map (\ref{eq:wd-isom}) is injective.
Since $\wD$ is a formally self-adjoint operator on $\rL^2(\sfS(M),\allowbreak \rho \dv)$, by the same argument as in~\cite[Proposition~5.6]{Lee_2019ge}, the surjectivity of (\ref{eq:wd-isom}) also follows from (\ref{eq:inj-est}).
Therefore, we have shown that (\ref{eq:wd-isom}) is an isomorphism.

Now, by (\ref{eq:def-g}) and (\ref{eq:def-rho}), $\xi \coloneqq \wD \Psi_{\infty} \in \rL^2(\sfS(M), \rho \dv)$.
Using the isomorphism (\ref{eq:wd-isom}), we can find $\eta \in \rW^{1,2}_{-(n-2)/2}(\sfS(M),\rho \dv)$ such that $-\xi = \wD \eta$.
Setting $\psi = \eta + \Psi_{\infty}$, we have $\wD \psi = 0$ and $\psi - \Psi_{\infty} \in \rW^{1,2}_{-(n-2)/2}(\sfS(M),\rho \dv) = \rW^{1,2}_{-(n-2)/2}(\sfS(M))$.
Finally, $\psi$ is smooth due to the elliptic regularity of $\wD$.

\subsection{}\label{sub:kato}
For the Witten-type spinor $\psi$, we are going to show the following Kato-type inequality similar to (\ref{eq:kato-extra-int}),
\begin{equation}
  \label{eq:kato}
  \int_{M_r} \bigl( |\nabla \psi|^2 - \frac{1}{4(n+2)} |\nabla \log \rho|^2 |\psi|^2 \bigr) \rho \dv \ge \frac{1}{2} \int_{M_r} |\nabla |\psi||^2 \rho \dv.
\end{equation}
We will prove the above inequality in the same way as (\ref{eq:kato-extra-int}).
As before, it suffices to show the corresponding pointwise inequality at any point $x$ where $\psi(x) \ne 0$.

Recall that in (\ref{eq:def-Psi}), we have defined $\Psi = \rho^{1/2}\psi$ such that $\opD \Psi = 0$.
As a result, by the refined Kato inequality~\cite[(3.9)]{Calderbank_2000re} and~\cite[Proposition~4.1]{Davaux_2003op} (or directly by (\ref{eq:bc-eq})),
\begin{equation}
  \label{eq:r-kato}
  |\nabla | \Psi| |^2_x \le \frac{n-1}{n} |\nabla \Psi|^2_x.
\end{equation}
Since $\Psi = \rho^{1/2}\psi$, combining (\ref{eq:r-kato}) and (\ref{eq:bs-sqrt}), we have
\begin{equation*}
  |\nabla |\psi| |^2_x \le \frac{n-1}{n} |\nabla \psi|^2_x - \frac{1}{2n} \langle\nabla \log \rho, \nabla |\psi|^2 \rangle_x - \frac{1}{4n} |\log \rho|^2_x |\psi|^2_x.
\end{equation*}
By elementary inequalities, for $k > 0$,
\begin{equation*}
  |\langle\nabla \log \rho, \nabla |\psi|^2 \rangle|_x \le 2|\nabla \log \rho|_x |\psi|_x |\nabla|\psi||_x \le k |\nabla \log \rho|^2_x |\psi|^2_x + \frac{1}{k} |\nabla | \psi||^2_x.
\end{equation*}
The above two inequalities imply that
\begin{equation}
  \label{eq:key-est}
  |\nabla \psi|^2_x \ge \frac{n}{n-1} (1 - \frac{1}{2n k})|\nabla |\psi||^2_x + \frac{n}{n-1} (\frac{1}{4n} - \frac{k}{2n}) |\nabla \log \rho|^2_x |\psi|^2_x.
\end{equation}

We choose $k$ such that
\begin{equation}
  \label{eq:k-range}
  \begin{cases}
    \frac{n}{n-1} (1 - \frac{1}{2n k}) \ge \frac{1}{2},\\
    \frac{n}{n-1} (\frac{1}{4n} - \frac{k}{2n}) \ge \frac{1}{4(n+2)},
  \end{cases}
\end{equation}
which is equivalent to
\begin{equation*}
  \frac{1}{n+1} \le k \le \frac{3}{2(n+2)}.
\end{equation*}
Therefore, $k$ exists if and only if
\begin{equation*}
  \frac{1}{n+1} \le \frac{3}{2(n+2)},\text{ i.e.\ }n \ge 1.
\end{equation*}
As a result, by (\ref{eq:k-range}),
\begin{equation}
  \label{eq:pt-kato}
  |\nabla \psi|^2_x \ge \frac{1}{2}|\nabla |\psi||^2_x + \frac{1}{4(n+2)}|\nabla \log \rho|^2_x |\psi|^2_x,
\end{equation}
which is the pointwise estimate required for (\ref{eq:kato}).

\subsection{}
With (\ref{eq:kato}), and using (\ref{eq:a-b}), (\ref{eq:m-int-w}), the following inequality holds,
\begin{equation*}
  \label{eq:main-est}
  \begin{multlined}
    \frac{\omega_{n-1}(n-2)}{2}((n-1)\alpha + 2\beta) =  \int_{M} \big(|\nabla \psi|^2  + \frac{1}{4}  (\scv - 2 \Delta \log \rho - |\nabla \log \rho|^2) |\psi|^2 \big)\rho\dv\\
    =  \int_{M} \bigl( |\nabla \psi|^2 - \frac{1}{4(n+2)} |\nabla \log \rho|^2 |\psi|^2 \bigr) \rho \dv + \frac{1}{4} \int_{M} (\scv - 2 \Delta \log \rho - \frac{n+1}{n+2} | \nabla \log \rho|^2 ) |\psi|^2 \rho\dv \\
    \ge  \frac{1}{2} \Big(\int_{M} |\nabla |\psi||^2 \rho \dv + \frac{1}{2} \int_{M} (\scv - 2 \Delta \log \rho - \frac{n+1}{n+2} | \nabla \log \rho|^2 ) |\psi|^2 \rho\dv \Big).
  \end{multlined}
\end{equation*}
Due to Lemma~\ref{lm:test-f}, we can take $h = |\psi|$ in (\ref{eq:bw-pos}).
Then, $(n-1)\alpha + 2\beta > 0$ (resp.\ $(n-1)\alpha + 2\beta \ge 0$) follows from the above inequality and (\ref{eq:bw-pos}) immediately.

The proof of Theorem~\ref{thm:bw} in the single end case is complete.

\begin{remark}
  \label{rk:param}

  After finishing the proof of Theorem~\ref{thm:bw} in the single end case, now we reexamine the role played by the coefficient $\frac{n+1}{n+2}$ in (\ref{eq:bw-pos}) during the proof.

  To make the analysis more transparent, we replace the coefficient $\frac{n+1}{n+2}$ in (\ref{eq:bw-pos}) with a parameter $\gamma$.\footnote{A similar parameter also appears in~\cite{Wang_2026lm}.}
  Retracing the proof, we see that to prove Theorem~\ref{thm:bw} in this more general setting, we have to show the following variant of (\ref{eq:pt-kato}),
\begin{equation}
    \label{eq:pt-kato-g}
    |\nabla \psi|^2_x \ge \frac{1}{2}|\nabla |\psi||^2_x + \frac{1-\gamma}{4}|\nabla \log \rho|^2_x |\psi|^2_x.
  \end{equation}

  If $\gamma \ge 1$, the above inequality follows trivially from the usual Kato inequality.
  If $\gamma < 1$, a calculation analogous to (\ref{eq:r-kato})-(\ref{eq:k-range}) yields the following constraint on $k$,
  \begin{equation*}
    \frac{1}{n+1} \le k \le \frac{1 - (1-\gamma)(n-1)}{2},
  \end{equation*}
  which implies that $\gamma$ should satisfy
  \begin{equation}
    \label{eq:gamma-r1}
    \gamma \ge \frac{n}{n+1},
  \end{equation}
  to ensure the existence of $k$.
  Therefore, to obtain the estimate (\ref{eq:pt-kato-g}), it seems that $\gamma= \frac{n}{n+1}$, rather than $\gamma = \frac{n+1}{n+2}$, is the optimal value for $\gamma$.

  However, this is not the end of the story.
  Recall that to obtain the estimate (\ref{eq:pt-kato}), we should first verify the existence of the Witten-type spinor $\psi$.
  The key to such an existence result is the estimate (\ref{eq:pt-kato-s}).
  To prove the corresponding estimate (\ref{eq:pt-kato-s}) with the extra parameter $\gamma$ as above, a calculation similar to (\ref{eq:bc-eq})-(\ref{eq:l-ineq}) shows that $\gamma$ must satisfy
\begin{equation}
    \label{eq:gamma-r3}
\frac{n-1}{1-l} < \frac{2\gamma - 1}{1 - \gamma}.
  \end{equation}
  Since $0 < l < 1$, the above inequality yields the following range on $\gamma$,
  \begin{equation}
    \label{eq:gamma-r2}
    n-1 < \frac{2\gamma - 1}{1 - \gamma},\text{ or equivalently } \gamma > \frac{n}{n+1}.
  \end{equation}
Combining (\ref{eq:gamma-r1}) and (\ref{eq:gamma-r2}), we observe that the admissible range of $\gamma$ is in fact an open interval.
  In other words, there is no optimal choice for $\gamma$.
  Among all admissible values, $\gamma = \frac{n+1}{n+2}$ chosen in (\ref{eq:bw-pos}) has the merit of a relatively simple form.
  In fact, the right hand side of (\ref{eq:gamma-r3}) is equal to $n$ with such a value.

  Finally, based on the above discussion, it would be interesting to ask: when $\gamma = \frac{n}{n+1}$, does the Witten-type spinor $\psi$ still exist?
\end{remark}

\section{Proof of Theorem~\ref{thm:bw}, multiple ends case}\label{sec:m-end}
\subsection{}
In this section, we prove Theorem~\ref{thm:bw} for the general case, that is, $(M,g)$ is a complete Riemannian manifold but $M$ may have other ends besides the AE end $E_0$.
In~\cite{Cecchini_2024po}, the authors use the deformed Dirac operator to prove the classical PMT on manifolds with multiple ends.
Inspired by their results, we deform the weighted Dirac operator used in~\S~\ref{sec:s-end} by adding a suitable potential term and use it to prove the general case.

\subsection{}
We fix a manifold with boundary $M_0$ inside $M$ such that $E_0 \subseteq M_0$ and $M_0 - E_0$ is compact.
We also fix a small positive parameter $\lambda$.
Then, we choose another manifold with boundary $M_{\lambda}$ inside $M$ such that $M_0 \subseteq M_{\lambda}$ and $M_{\lambda} - M_0$ is compact.
Let $\partial (M_{\lambda} - M_0^{\circ}) = \partial_{-} M_{\lambda} \sqcup \partial_+ M_{\lambda}$ with $\partial_- M_{\lambda} = \partial M_0$.
We also require that
\begin{equation*}
  \dist(\partial_-M_{\lambda}, \partial_+ M_{\lambda}) > 1/\lambda.
\end{equation*}

Moreover, we choose a manifold with boundary $M_{\mu}$ inside $M$ again, such that $M_{\lambda} \subseteq M_{\mu}$ and $M_{\mu} - M_{\lambda}$ is compact.
Similarly, let $\partial (M_{\mu} - M_{\lambda}^{\circ}) = \partial_{-} M_{\mu} \sqcup \partial_+ M_{\mu}$ such that $\partial_- M_{\mu} = \partial_+ M_{\lambda}$.
We require that
\begin{equation*}
  \dist(\partial_-M_{\mu}, \partial_+ M_{\mu}) > 3.
\end{equation*}
Note that the completeness of $(M,g)$ ensures the existence of $M_{\lambda}$ and $M_{\mu}$.

The function spaces used in \S~\ref{sec:s-end} can be defined for $\Mm$ in the same way.
In particular, Lemma~\ref{lm:test-f} also holds on $\Mm$ if we further assume that $\supp s \subseteq \Mm^{\circ}$.

However, since $M - E_0$ is noncompact, we need to specify the extended radial coordinate function $r$ used in the weighted norm on $M$.
For our purposes, it is convenient to choose $r$ such that $r \ge 1$ on $M$ and $r \equiv 1$ outside a small neighborhood of $E_0$ (which is contained in $M_0$).
Note that this is always possible by retracting $E_0$ suitably.

\subsection{}
Let $\kappa > 0$ be another parameter.
By~\cite[Lemma~3.3]{Cecchini_2024po}, we can find a smooth function
\begin{equation*}
  h_{\lambda,\kappa}: M_{\lambda}- M_{0} \rightarrow [\lambda, +\infty)
\end{equation*}
such that $h_{\lambda,\kappa} \equiv \lambda$ in a neighborhood of $\partial_{-}M_{\lambda} = \partial M_0$, $h_{\lambda,\kappa} \equiv \kappa$ in a neighborhood of $\partial_{+}M_{\lambda}$, and
\begin{equation}
  \label{eq:h-est}
  h^2_{\lambda,\kappa}(x) - |\nabla h_{\lambda,\kappa}|_x \ge 0
\end{equation}
holds for any $x \in M_{\lambda} - M_0$.

Let $K \subseteq M_0$ be the closure of a collar neighborhood of $\partial M_0$ inside $M_0$ such that $K \cap E_0 = \emptyset$.
Choose a smooth function $\chi: M_0 \rightarrow [0,1]$ such that $\chi|_{M_0 - K} \equiv 0$ and $\chi \equiv 1$ in a neighborhood of $\partial M_0$.

Similar to~\cite{Cecchini_2024po}, we define a smooth function $\varphi_{\lambda,\kappa}: \Mm \rightarrow [0, +\infty)$ as follows,
\begin{equation}
  \label{eq:def-vphi}
  \varphi_{\lambda,\kappa}(x) \coloneqq
  \begin{cases}
    \kappa, & x\in \Mm - M_{\lambda};\\
    h_{\lambda,\kappa}(x), & x\in M_{\lambda} - M_{0};\\
    \lambda \chi(x), & x\in M_0.
  \end{cases}
\end{equation}
By (\ref{eq:h-est}) and (\ref{eq:def-vphi}),
\begin{equation}
  \label{eq:vphi-est}
  \varphi^2_{\lambda,\kappa} - | \nabla \varphi_{\lambda,\kappa} | \ge
  \begin{cases}
    - C_0 \lambda, & x \in K; \\
    \kappa^2, & \Mm- M_{\lambda};\\
    0, & \text{otherwise},
  \end{cases}
\end{equation}
where $C_0 > 0$ is a constant depending only on $\chi$.

Following~\cite{Su_2022no}, we also introduce a pair of cut-off functions on $M_{\mu} - M_{\lambda}$.
Let $d(x)$ be a regularization of the distance function $\dist(x, \partial_{-}M_{\mu})$ on $M_{\mu} - M_{\lambda}$ such that
\begin{equation*}
  | \nabla d |_x \le \frac{3}{2}, \quad x\in M_{\mu} - M_{\lambda}.
\end{equation*}
Choose a smooth function $\Phi: [0, +\infty) \rightarrow [0,1]$ such that $\Phi \equiv 1$ on $[0,1]$, $\Phi \equiv 0$ on $[2, +\infty)$, and $0 \le \Phi' \le 3/2$ on $[0, +\infty)$.
We define a smooth function $\phi: \Mm \rightarrow [0,1]$ by
\begin{equation*}
  \phi(x) \coloneqq
  \begin{cases}
    1, & x\in M_{\lambda};\\
    \Phi({d(x)}), & x \in M_{\mu} - M_{\lambda}.\\
\end{cases}
\end{equation*}
As in~\cite[p.~115]{Bismut_1991aa} and~\cite[(2.14)]{Su_2022no}, let $\phi_{1}, \phi_{2}: \Mm \rightarrow [0,1]$ be
\begin{equation*}
  \begin{gathered}
    \phi_{1} \coloneqq \frac{\phi}{\big(\phi_{\mu}^2+(1-\phi)^2\big)^{\frac{1}{2}}},\quad
    \phi_{2} \coloneqq \frac{1-\phi}{\big(\phi_{\mu}^2+(1-\phi)^2\big)^{\frac{1}{2}}},\\
    \phi^2_{1} + \phi^2_{2} = 1.
  \end{gathered}
\end{equation*}
By the definitions of $d$ and $\phi$, for $i=1,2$, we have
\begin{equation}
  \label{eq:phi12}
  | \nabla \phi_{i} |_x \le
  \begin{cases}
    {C_1},& x\in M_{\mu} - M_{\lambda},\\
    0, & \text{otherwise},
  \end{cases}
\end{equation}
where $C_1$ is a constant independent of the specific choices of $M_{\mu}$ and $\lambda$.

\subsection{}
Let $\sfE = \sfS(\Mm) \oplus \sfS(\Mm)$ be a $\bZ_2$-graded bundle over $\Mm$ such that the Clifford action $\caE(-)$ on $\sfE$ is an odd operator, that is, for $v_x\in \rT_x \Mm$ and $\psi_{i,x} \in \sfS_x(\Mm)$,
\begin{equation*}
  \caE(v_x) (\psi_{1,x}, \psi_{2,x}) \coloneqq (\opc(v_x) \psi_{2,x}, \opc(v_x) \psi_{1,x}).
\end{equation*}
As a result, the (twisted) Dirac operator on $\DE$ is
\begin{equation*}
  \DE \coloneqq \sum_{i=1}^n \caE(e_i) \nabla_{e_i} =
  \begin{bmatrix}
    0 & \opD \\
    \opD & 0
  \end{bmatrix}
  : \Gamma(\Mm, \sfE) \rightarrow \Gamma(\Mm, \sfE),
\end{equation*}
where $e_1,\cdots,e_n$ is a locally othornormal basis for $\rT M_{\mu}$.
Moreover, the weighted Dirac operator on $\DE$ is
\begin{equation*}
  \wDE \coloneqq \rho^{-1/2} \DE \rho^{1/2} = \DE + \frac{1}{2}\caE(\nabla \log \rho) =
  \begin{bmatrix}
    0 & \wD \\
    \wD & 0
  \end{bmatrix}.
\end{equation*}

Note that there exists another natural odd endomorphism on $\sfE$,
\begin{equation*}
  \opV \coloneqq
  \begin{bmatrix}
    0 & -i \\
    i & 0
  \end{bmatrix}.
\end{equation*}
By definition,
\begin{equation}
  \label{eq:dv-comm}
  [\DE, \opV]_+ \coloneqq \DE \opV + \opV \DE = 0 \text{ and } [\wDE, \opV]_+ = \rho^{-1/2}[\DE, \opV]_+\rho^{1/2} = 0.
\end{equation}

As in~\cite{Cecchini_2024po}, in contrast to the single end case, we now use the deformed Dirac operator
\begin{equation*}
  \dwDE \coloneqq \wDE + \varphi_{\lambda,\kappa} \opV.
\end{equation*}

\subsection{A local boundary condition.}
Let $\opP \coloneq \caE(\nu) \opV: \sfE|_{\partial M_{\mu}} \rightarrow \sfE|_{\partial M_{\mu}}$ be a bundle endomorphism defined on $\partial M_{\mu}$, where $\nu$ is the outward-pointing unit normal vector field on $\partial \Mm$.
Then, let
\begin{gather*}
  \Gamma_c(\Mm, \sfE, \opP) \coloneqq \aset{s\in \Gamma_c(\Mm, \sfE)| \opP(s|_{\partial \Mm}) = -s|_{\partial \Mm}},\\
  \rW^{1,2}_{-(n-2)/2}(\sfE,\rho \dv, \opP) \coloneqq \aset{s\in \rW^{1,2}_{-(n-2)/2}(\sfE,\rho \dv)| \opP(s|_{\partial \Mm}) = -s|_{\partial \Mm}}.
\end{gather*}
Note that $\Gamma_c(\Mm, \sfE, \opP)$ is dense in $\rW^{1,2}_{-(n-2)/2}(\sfE,\rho \dv, \opP)$.
As in (\ref{eq:bd-term}), for $s\in \Gamma_c(\Mm, \sfE, \opP)$,
\begin{equation}
  \label{eq:bd-mm}
  \begin{aligned}
    & \int_{\Mm} \big(|\nabla s|^2 - |\wDE s|^2 + \frac{1}{4}  (\scv - 2 \Delta \log \rho - |\nabla \log \rho|^2) |s|^2 \big)\rho\dv\\
    = & \int_{\partial \Mm} \langle s, \nabla_{\nu}s + \caE(\nu) \wDE s \rangle \rho\dA \\
    = & \int_{\partial \Mm} \langle s, \nabla_{\nu}s + \caE(\nu) (\caE(\nu)\nabla_{\nu} - \caE(\nu) \opD^{\partial,\sfE} + \caE(\nu)\frac{H}{2} + \frac{1}{2}\caE( \nabla \log \rho)) s \rangle \rho\dA \\
    = & \int_{\partial \Mm} \Big(\langle s, \opD^{\partial,\sfE}_\rho s\rangle - \frac{H + \nabla_{\nu} \log \rho}{2} |s|^2 \Big) \rho\dA
        = -\int_{\partial \Mm} \frac{H + \nabla_{\nu} \log \rho}{2} |s|^2 \rho\dA,
  \end{aligned}
\end{equation}
where $\opD^{\partial,\sfE}$ is the boundary operator of $\opD^{\sfE}$, $\opD^{\partial,\sfE}_{\rho} = \rho^{-1/2} \opD^{\partial,\sfE} \rho^{1/2}$ and $H$ is the mean curvature of $\partial \Mm$.
Note that the last equality of (\ref{eq:bd-mm}) holds because $\opD^{\partial,\sfE}_{\rho} \opP = - \opP \opD^{\partial,\sfE}_{\rho}$.

At the same time, by the divergence theorem, for $s\in \Gamma_c(\Mm, \sfE, \opP)$,
\begin{multline}
  \label{eq:bd-mm2}
  \int_{\Mm} \langle\wDE(s) , \varphi_{\lambda,\kappa} \opV ( s)\rangle \rho \dv - \int_{\Mm} \langle s , \wDE(\varphi_{\lambda,\kappa}\opV ( s))\rangle \rho \dv \\
  = - \int_{\partial \Mm} \langle s, \caE(\nu)(\varphi_{\lambda,\kappa} \opV ( s))\rangle \rho \dv
  = \kappa^2 \int_{\partial \Mm} |s|^2 \rho \dv.
\end{multline}
Combining (\ref{eq:bd-mm}) and (\ref{eq:bd-mm2}), for $s\in \Gamma_c(\Mm, \sfE, \opP)$, we obtain
\begin{multline}
  \label{eq:dwde-sqrt}
  \int_{\Mm} |\dwDE s|^2 \rho \dv = \int_{\Mm} \big(|\nabla s|^2 + \frac{1}{4}  (\scv - 2 \Delta \log \rho - |\nabla \log \rho|^2) |s|^2 \big)\rho\dv \\
  + \int_{\Mm} \langle s , \caE(\nabla \varphi_{\lambda,\kappa})\opV  ( s)\rangle \rho \dv
  + \int_{\Mm} \varphi_{\lambda,\kappa}^2|s|^2 \rho \dv + \int_{\partial \Mm} \frac{H + \nabla_{\nu} \log \rho + 2\kappa^2}{2} |s|^2 \rho\dA.
\end{multline}
By a density argument, (\ref{eq:dwde-sqrt}) also holds for $s\in \rW^{1,2}_{-(n-2)/2}(\sfE,\rho \dv, \opP)$.

\subsection{}
As in the single end case, to prove Theorem~\ref{thm:bw} in the multiple ends case, we first need to show that $\dwDE$ is invertible in some sense.
Since $\Mm$ is a manifold with boundary, the local boundary condition introduced in the previous subsection is required.

By~\cite[Theorem~2.12]{Cecchini_2024po},
\begin{equation}
  \label{eq:wd-isom-E}
  \dwDE: \rW^{1,2}_{-(n-2)/2}(\sfE,\rho \dv, \opP) \rightarrow \rL^2(\sfE, \rho \dv)
\end{equation}
is a Fredholm operator with nonnegative index.\footnote{Strictly speaking,~\cite[Theorem~2.12]{Cecchini_2024po} proves the $\rho = 1$ case.
However, the same proof also works for the general case.}
Therefore, to show (\ref{eq:wd-isom-E}) is an isomorphism, we only need to show that it is injective for suitably chosen $\lambda,\kappa$.

For $s\in \rW^{1,2}_{-(n-2)/2}(\sfE,\rho \dv, \opP)$, using the cut-off functions $\phi_{1}$ and $\phi_{2}$, we have\footnote{In this section, unless explicitly stated otherwise, the norm $\norm{\bullet}$ is evaluated on $\Mm$.}
\begin{equation}
  \label{eq:dd-s}
  \int_{\Mm} |\dwDE s|^2 \rho \dv = \norm{\dwDE s}^2_{\rL^2(\rho)} = \norm{\phi_{1} \dwDE s}^2_{\rL^2(\rho)} + \norm{\phi_{2} \dwDE s}^2_{\rL^2(\rho)}.
\end{equation}
For $i = 1, 2$, by (\ref{eq:phi12}),
\begin{multline}
  \label{eq:dd-s1}
  \norm{\phi_{i} \dwDE s}^2_{\rL^2(\rho)} = \norm{\dwDE( \phi_{i}s) - \caE(\nabla \phi_{i}) s}^2_{\rL^2(\rho)} \\
  \ge \frac{1}{2} \norm{\dwDE (\phi_{i}s)}^2_{\rL^2(\rho)} - \norm{\caE(\nabla \phi_{i}) s}^2_{\rL^2(\rho)}
  \ge \frac{1}{2} \norm{\dwDE (\phi_{i}s)}^2_{\rL^2(\rho)} - {C_1^2}\int_{M_{\mu} - M_{\lambda}} |s|^2 \rho \dv.
\end{multline}

Since $\supp (\phi_{2}s) \subseteq N - M_{\lambda}$, by (\ref{eq:def-vphi}), $\varphi_{\lambda,\kappa}(\phi_{2}s) = \kappa \phi_{2}s$.
Therefore, (\ref{eq:dwde-sqrt}) implies that
\begin{equation}
  \label{eq:d-phi2}
  \norm{\dwDE (\phi_{2}s)}^2_{\rL^2(\rho)} \ge \norm{\nabla (\phi_{2}s)}^2_{\rL^2(\rho)} + (\kappa^2 - C(g_{\Mm, \rho})) \norm{ \phi_{2}s}^2_{\rL^2(\rho)}  + (\kappa^2 - C(g_{\Mm, \rho}))\int_{\partial \Mm} |s|^2 \rho\dA,
\end{equation}
where $C(g_{\Mm}, \rho)$ is a positive constant depending on $g_{\Mm}$ and $\rho|_{\Mm}$.

Regarding $\phi_{1}s$, we first note that $\supp (\phi_{1}s) \subseteq M_{\mu}^\circ$.
Hence, by (\ref{eq:vphi-est}) and (\ref{eq:dwde-sqrt}),
\begin{equation}
  \label{eq:dd-phi1}
  \begin{aligned}
    & \norm{\dwDE (\phi_{1}s)}^2_{\rL^2(\rho)}\\
    = & \norm{\wDE (\phi_{1}s)}^2_{\rL^2(\rho)}
    + \int_{\Mm} \langle\phi_{1}s , \caE(\nabla \varphi_{\lambda,\kappa})\opV  (\phi_{1} s)\rangle \rho \dv
    + \int_{\Mm} \varphi_{\lambda,\kappa}^2|\phi_{1}s|^2 \rho \dv \\
    \ge & \norm{\wDE (\phi_{1}s)}^2_{\rL^2(\rho)} + \int_{\Mm} (\varphi_{\lambda,\kappa}^2 - |\nabla \varphi_{\lambda,\kappa}|) |\phi_{1} s|^2 \rho \dv \\
    \ge & \norm{\wDE (\phi_{1}s)}^2_{\rL^2(\rho)} + \kappa^2 \int_{M_{\mu} - M_{\lambda}} |\phi_{1} s|^2 \rho \dv - C_0 \lambda \int_{K} |s|^2 \rho \dv,
\end{aligned}
\end{equation}
where we also use $\phi_{1}|_K \equiv 1$.

Now, since $\supp (\phi_{1}s) \subseteq M_{\mu}^{\circ}$, $\phi_{1}s$ can be naturally viewed as a section over $M$.
Meanwhile, (\ref{eq:pt-kato-s}) is a pointwise estimate for sections defined on $M$.
As a result, (\ref{eq:pt-kato-s}) also holds\footnote{By definition, the estimate (\ref{eq:pt-kato-s}) holds for sections of $\sfS(M)$.
However, since $\sfE|_{M_{\mu}} = \sfS(M_{\mu}) \oplus \sfS(M_{\mu})$, (\ref{eq:pt-kato-s}) also applies to sections of $\sfE|_{M_{\mu}}$ used here.}
for $\phi_{1}s$.
Moreover, note that the right hand side of (\ref{eq:bd-mm}) vanishes for $\phi_{1}s$.
Thus, by (\ref{eq:pt-kato-s}) and (\ref{eq:bd-mm}), we can apply a variant of (\ref{eq:est-wD}) to $\phi_{1}s$,
\begin{multline*}
  (1 + \frac{1}{(n-1)l})\int_{M_{\mu}} |\wDE (\phi_{1} s)|^2 \rho\dv \ge \frac{c_0}{2} \int_{M_{\mu}} |\nabla |\phi_{1} s||^2 \rho \dv \\
  + \frac{1}{2} \Big(\int_{M_{\mu}} |\nabla |\phi_{1} s||^2 \rho \dv
  + \frac{1}{2} \int_{M_{\mu}} (\scv - 2 \Delta \log \rho - \frac{n+1}{n+2} | \nabla \log \rho|^2 ) |\phi_{1} s|^2 \rho\dv \Big).
\end{multline*}
Since $\supp (\phi_{1}s) \subseteq M_{\mu}^{\circ}$, by (\ref{eq:bw-pos}) and Lemma~\ref{lm:test-f}, the term in parentheses on the right hand side of the above formula is nonnegative.
In other words, we can find a constant $C_2 > 0$, independent of $\lambda, \kappa$, such that
\begin{equation}
  \label{eq:d-phi1}
  \norm{\nabla|\phi_{1}s|}^2_{\rL^2(\rho)} \le  C_2\norm{\wDE (\phi_{1}s)}^2_{\rL^2(\rho)},\quad C_2 = \frac{2}{c_0} + \frac{2}{(n-1)lc_0} .
\end{equation}

By (\ref{eq:dd-s})-(\ref{eq:d-phi1}), we have
\begin{multline}
  \label{eq:dwde-est0}
  \norm{\dwDE s}^2_{\rL^2(\rho)} \ge \frac{1}{2C_2}\norm{\nabla|\phi_{1}s|}^2_{\rL^2(\rho)} + \frac{1}{2}\norm{\nabla (\phi_{2}s)}^2_{\rL^2(\rho)}\\
  + \frac{\kappa^2}{2} \int_{M_{\mu} - M_{\lambda}} |\phi_{1} s|^2 \rho \dv - \frac{C_0 \lambda}{2} \int_{K} |s|^2 \rho \dv
  - {2C_1^2}\int_{M_{\mu} - M_{\lambda}} |s|^2 \rho \dv \\
+ \frac{\kappa^2 - C(g_{\Mm, \rho})}{2} \norm{ \phi_{2}s}^2_{\rL^2(\rho)}  + \frac{\kappa^2 - C(g_{\Mm, \rho})}{2}\int_{\partial \Mm} |s|^2 \rho\dA.
\end{multline}

On $M_0$, we need the weighted Poincar\'e inequality,
\begin{equation}
  \label{eq:bd-wp-ineq}
  \norm{s}^2_{\rL^2_{-(n-2)/2}(M_0, \rho\dv)} \le C_3 \norm{\nabla|s|}^2_{\rL^2(M_0,\rho\dv)},
\end{equation}
where $C_3$ is a constant independent of $\lambda,\kappa$.
As explained in~\cite[Proposition~2.7]{Cecchini_2024po}, the above inequality can be deduced from the usual weighted Poincar\'e inequality by a doubling argument.

Since $K \cap E_0 = \emptyset$, by our choice of $r$ on $M$, we have
\begin{equation}
  \label{eq:K-est}
  \int_{K} |s|^2 \rho \dv \le \norm{s}^2_{\rL^2_{-(n-2)/2}(M_0, \rho\dv)}.
\end{equation}

Now, we fix $\lambda$ such that
\begin{equation}
  \label{eq:lambda-v}
  \lambda^{-1} \ge 2C_0C_2C_3.
\end{equation}
By the above inequality, (\ref{eq:dwde-est0}), (\ref{eq:bd-wp-ineq}) and (\ref{eq:K-est}),
\begin{multline}
  \label{eq:dwde-est1}
  \norm{\dwDE s}^2_{\rL^2(\rho)} \ge \frac{1}{4C_2}\norm{\nabla|\phi_{1}s|}^2_{\rL^2(\rho)} + \frac{1}{2}\norm{\nabla (\phi_{2}s)}^2_{\rL^2(\rho)}\\
  + \frac{\kappa^2}{2} \int_{M_{\mu} - M_{\lambda}} |\phi_{1} s|^2 \rho \dv
  - {2C_1^2}\int_{M_{\mu} - M_{\lambda}} |s|^2 \rho \dv \\
+ \frac{\kappa^2 - C(g_{\Mm, \rho})}{2} \norm{ \phi_{2}s}^2_{\rL^2(\rho)}  + \frac{\kappa^2 - C(g_{\Mm, \rho})}{2}\int_{\partial \Mm} |s|^2 \rho\dA.
\end{multline}

Next, we fix a choice of $\Mm$ and choose
\begin{equation}
  \label{eq:kappa-v}
  \kappa \ge \sqrt{C(g_{\Mm}, \rho) + 4C^2_1 + 2}.
\end{equation}
Then,
\begin{multline*}
  \frac{\kappa^2}{2} \int_{M_{\mu} - M_{\lambda}} |\phi_{1} s|^2 \rho \dv
  - {2C_1^2}\int_{M_{\mu} - M_{\lambda}} |s|^2 \rho \dv \\
  + \frac{\kappa^2 - C(g_{\Mm, \rho})}{2} \norm{ \phi_{2}s}^2_{\rL^2(\rho)}
  + \frac{\kappa^2 - C(g_{\Mm, \rho})}{2}\int_{\partial \Mm} |s|^2 \rho\dA \\
  \ge (2C_1^2 + 1) \Big(\int_{M_{\mu} - M_{\lambda}} |\phi_{1} s|^2 \rho \dv + \int_{M_{\mu} - M_{\lambda}} |\phi_{2} s|^2 \rho \dv\Big)
  -  {2C_1^2}\int_{M_{\mu} - M_{\lambda}} |s|^2 \rho \dv \\
  = \int_{M_{\mu} - M_{\lambda}} |s|^2 \rho \dv.
\end{multline*}
Plugging the above inequality into (\ref{eq:dwde-est1}), we get
\begin{equation}
  \label{eq:dwde-est11}
  \norm{\dwDE s}^2_{\rL^2(\rho)} \ge \frac{1}{4C_2}\norm{\nabla|\phi_{1}s|}^2_{\rL^2(\rho)} + \frac{1}{2}\norm{\nabla (\phi_{2}s)}^2_{\rL^2(\rho)} + \int_{M_{\mu} - M_{\lambda}} |s|^2 \rho \dv.
\end{equation}

If $\dwDE s = 0$, by (\ref{eq:dwde-est11}), we know that $s|_{\Mm - M_{\lambda}} \equiv 0$ and $|\phi_1 s|$ and $|\phi_2s|$ are constant, which implies that $s = 0$.

In summary, we have shown the following result.
First, we choose $\lambda$ in the range (\ref{eq:lambda-v}).
Second, we fix a manifold with boundary $\Mm$ based on $\lambda$.
Third, we choose $\kappa$ in the range (\ref{eq:kappa-v}) based on $\Mm$ and $\rho|_{\Mm}$.
Then, (\ref{eq:wd-isom-E}) must be an isomorphism.

For later application, we also notice that for such choices of $\lambda,\kappa$, by (\ref{eq:dwde-est0}),  (\ref{eq:bd-wp-ineq}) and (\ref{eq:K-est}), the following estimate also holds,
\begin{multline}
  \label{eq:dwde-est2}
  \norm{\dwDE s}^2_{\rL^2(\rho)} \ge \frac{1}{4C_2}\norm{\nabla|\phi_{1}s|}^2_{\rL^2(\rho)} \ge \frac{1}{4C_2}\int_{M_0}|\nabla|s||^2 \rho \dv \\
  \ge \frac{1}{4C_2C_3}\norm{s}^2_{\rL^2_{-(n-2)/2}(M_0, \rho\dv)} \ge \frac{1}{4C_2C_3}\int_{K} |s|^2 \rho \dv.
\end{multline}

\subsection{}
As in the single end case, the isomorphism (\ref{eq:wd-isom-E}) enables us to choose a Witten-type spinor on $\Mm$.
First, let $\Psi^{\sfE}_{\infty} \coloneqq (\Psi_{\infty}, \Psi_{\infty})$ and require that
\begin{equation*}
  \supp \Psi^{\sfE}_{\infty} \subseteq M_0 \text{ and } \supp \Psi^{\sfE}_{\infty} \cap K = \emptyset.
\end{equation*}
Then, let $\xi \coloneqq \dwDE(\Psi^{\sfE}_{\infty}) = \wDE(\Psi^{\sfE}_{\infty}) \in \rL^2(\sfE, \rho \dv)$.
Having fixed $\lambda,\kappa$ as explained above, the isomorphism (\ref{eq:wd-isom-E}) guarantees the existence of $\eta \in \rW^{1,2}_{-(n-2)/2}(\sfE,\rho \dv,\allowbreak \opP)$ such that $\dwDE(\eta) = -\xi$.
As a result, taking $\psi = \Psi^{\sfE}_{\infty} + \eta$, we have
\begin{equation}
  \label{eq:dwde-ws}
  \dwDE \psi = 0, \quad \psi - \Psi^{\sfE}_{\infty} \in \rW^{1,2}_{-(n-2)/2}(\sfE,\rho \dv, \opP).
\end{equation}

By (\ref{eq:a-b}) and (\ref{eq:bd-term}), we have
\begin{multline*}
  (n-2)\omega_{n-1}((n-1)\alpha + 2\beta) + \int_{\Mm} |\wDE(\phi_1\psi)^2| \rho \dv \\
  = \int_{\Mm} \big(|\nabla (\phi_1 \psi)|^2 + \frac{1}{4}  (\scv - 2 \Delta \log \rho - |\nabla \log \rho|^2) |\phi_1 \psi|^2 \big)\rho\dv.
\end{multline*}
Using (\ref{eq:pt-kato-s}), the above equality gives
\begin{multline*}
  (n-2)\omega_{n-1}((n-1)\alpha + 2\beta) + (1 + \frac{1}{(n-1)l})\int_{M_{\mu}} |\wDE (\phi_{1} \psi)|^2 \rho\dv \ge \frac{c_0}{2} \int_{M_{\mu}} |\nabla |\phi_{1} \psi||^2 \rho \dv \\
  + \frac{1}{2} \Big(\int_{M_{\mu}} |\nabla |\phi_{1} \psi||^2 \rho \dv
  + \frac{1}{2} \int_{M_{\mu}} (\scv - 2 \Delta \log \rho - \frac{n+1}{n+2} | \nabla \log \rho|^2 ) |\phi_{1} \psi|^2 \rho\dv \Big).
\end{multline*}
Applying (\ref{eq:bw-pos}) and Lemma~\ref{lm:test-f} to $|\phi_{1} \psi|$, the above inequality implies
\begin{equation*}
  \frac{2(n-1)\omega_{n-1}}{c_0C_2}((n-1)\alpha + 2\beta) + \int_{M_{\mu}} |\wDE (\phi_{1} \psi)|^2 \rho\dv \ge C_2^{-1}\int_{M_{\mu}} |\nabla |\phi_{1} \psi||^2 \rho \dv + \frac{1}{c_0C_2}\opQ(|\phi_{1}\psi|),
\end{equation*}
where $\opQ(|\phi_{1} \psi|) = \int_{\Mm}Q|\phi_{1}\psi|^2 \rho \dv$.

Note that $\supp \varphi_{\lambda,\kappa} \phi_{1} \psi \subseteq \Mm^{\circ} \cap K$, so the divergence theorem yields,
\begin{equation*}
  \int_{\Mm} \langle\wDE(s) , \varphi_{\lambda,\kappa} \opV ( \phi_{1}s)\rangle \rho \dv = \int_{\Mm} \langle s , \wDE(\varphi_{\lambda,\kappa}\opV ( \phi_{1}s))\rangle \rho \dv.
\end{equation*}
By the above two formulas and (\ref{eq:vphi-est}), the following inequality, similar to (\ref{eq:dd-phi1}), holds,
\begin{equation*}
  \begin{aligned}
    & \frac{2(n-1)\omega_{n-1}}{c_0C_2}((n-1)\alpha + 2\beta) + \int_{M_{\mu}} |\dwDE (\phi_{1} \psi)|^2 \rho\dv \\
    \ge & C_2^{-1}\int_{M_{\mu}} |\nabla |\phi_{1} \psi||^2 \rho \dv + \int_{\Mm} (\varphi_{\lambda,\kappa}^2 - |\nabla \varphi_{\lambda,\kappa}|) |\phi_{1} s|^2 \rho \dv + \frac{1}{c_0C_2}\opQ(|\phi_{1}\psi|) \\
    \ge & C_2^{-1}\int_{M_{\mu}} |\nabla |\phi_{1} \psi||^2 \rho \dv + \kappa^2 \int_{M_{\mu} - M_{\lambda}} |\phi_{1} s|^2 \rho \dv - C_0 \lambda \int_{K} |s|^2 \rho \dv + \frac{1}{c_0C_2}\opQ(|\phi_{1}\psi|).
  \end{aligned}
\end{equation*}

Meanwhile, we remark that (\ref{eq:dd-s}) and (\ref{eq:dd-s1}) also hold for $\psi$.
As a result, combining (\ref{eq:dd-s}), (\ref{eq:dd-s1}), (\ref{eq:dwde-ws}) and the above inequality, we obtain
\begin{equation*}
  \begin{aligned}
    & \frac{(n-1)\omega_{n-1}}{c_0C_2}((n-1)\alpha + 2\beta) \\
    = & \frac{(n-1)\omega_{n-1}}{c_0C_2}((n-1)\alpha + 2\beta) + \int_{M_{\mu}} |\dwDE (\psi)|^2 \rho\dv \\
    \ge & \frac{(n-1)\omega_{n-1}}{c_0C_2}((n-1)\alpha + 2\beta) + \frac{1}{2} \norm{\dwDE (\phi_{1}\psi)}^2_{\rL^2(\rho)} \\
    & \hspace{4cm}+ \frac{1}{2} \norm{\dwDE (\phi_{2}\psi)}^2_{\rL^2(\rho)} - {2C_1^2}\int_{M_{\mu} - M_{\lambda}} |\psi|^2 \rho \dv \\
    \ge & \frac{1}{2C_2}\int_{M_{\mu}} |\nabla |\phi_{1} \psi||^2 \rho \dv + \frac{\kappa^2}{2} \int_{M_{\mu} - M_{\lambda}} |\phi_{1} \psi|^2 \rho \dv - \frac{C_0 \lambda}{2} \int_{K} |\psi|^2 \rho \dv \\
    & \hspace{3cm}+ \frac{1}{2c_0C_2}\opQ(|\phi_{1}\psi|) + \frac{1}{2} \norm{\dwDE (\phi_{2}\psi)}^2_{\rL^2(\rho)} - {2C_1^2}\int_{M_{\mu} - M_{\lambda}} |\psi|^2 \rho \dv.
  \end{aligned}
\end{equation*}
Then, plugging (\ref{eq:d-phi2}) into the above inequality, we have an estimate analogous to (\ref{eq:dwde-est0})
\begin{multline}
  \label{eq:mass-est0}
  \frac{(n-1)\omega_{n-1}}{c_0C_2}((n-1)\alpha + 2\beta) \ge \frac{1}{2C_2}\norm{\nabla |\phi_{1}\psi|}^2_{\rL^2(\rho)} + \frac{1}{2}\norm{\nabla (\phi_{2}\psi)}^2_{\rL^2(\rho)}\\
  + \frac{\kappa^2}{2} \int_{M_{\mu} - M_{\lambda}} |\phi_{1} \psi|^2 \rho \dv
  - \frac{C_0 \lambda}{2} \int_{K} |\psi|^2 \rho \dv
  - {2C_1^2}\int_{M_{\mu} - M_{\lambda}} |\psi|^2 \rho \dv \\
  + \frac{1}{2c_0C_2}\opQ(|\phi_{1}\psi|)
  + \frac{\kappa^2 - C(g_{\Mm, \rho})}{2} \norm{ \phi_{2}\psi}^2_{\rL^2(\rho)}
  + \frac{\kappa^2 - C(g_{\Mm, \rho})}{2}\int_{\partial \Mm} |\psi|^2 \rho\dA.
\end{multline}

Unlike in (\ref{eq:dwde-est0}), since $|\psi| \notin \rW^{1,2}_{-(n-2)/2}$, we cannot apply the weighted Poincar\'e inequality directly to control the integral over $K$ in (\ref{eq:mass-est0}).
Here, we use an argument in~\cite{Cecchini_2024po}.
Since $\Psi^{\sfE}_{\infty}|_{K} \equiv 0$, by definition, $\psi|_K = \eta|_K$.
Due to $\eta \in \rW^{1,2}_{-(n-2)/2}(\sfE,\rho \dv,\allowbreak \opP)$, by (\ref{eq:dwde-sqrt}),
\begin{multline*}
  \int_{K} |\psi|^2 \rho \dv = \int_{K} |\eta|^2 \rho \dv \le 4C_2 C_3\norm{\dwDE \eta}^2_{\rL^2(\rho)} \\
  = 4C_2 C_3 \norm{\dwDE \Psi^{\sfE}_{\infty}}^2_{\rL^2(M_0,\rho\dv)} = 4C_2C_3 \norm{\wDE \Psi^{\sfE}_{\infty}}^2_{_{\rL^2(M_0,\rho\dv)}}.
\end{multline*}
Now, by choosing $\lambda$ and $\kappa$ as in (\ref{eq:lambda-v}) and (\ref{eq:kappa-v}), the above inequality and (\ref{eq:mass-est0}) give
\begin{equation}
  \label{eq:est-ab}
  \frac{(n-1)\omega_{n-1}}{c_0C_2}((n-1)\alpha + 2\beta) \ge - 4C_0C_2C_3 \lambda \norm{\wDE \Psi^{\sfE}_{\infty}}^2_{\rL^2(M_0,\rho\dv)} + \frac{1}{2c_0C_2}\opQ(|\phi_{1}\psi|).
\end{equation}
Note that $\|\wDE \Psi^{\sfE}_{\infty}\|^2_{\rL^2(M_0,\rho\dv)}$ is a constant and $\lambda$ in the range (\ref{eq:lambda-v}) can be arbitrarily small.
If the scalar curvature of $g$ is nonnegative in the BW sense, the above estimate implies that $(n-1)\alpha + 2\beta \ge 0$.

If the scalar curvature of $g$ is positive in the BW sense, to show $(n-1)\alpha + 2\beta > 0$, we need a few more arguments.
Let $\aset{\lambda_j}$ be a sequence such that $\lambda_j \rightarrow 0$ as $j \rightarrow +\infty$ and let $\psi_j$ be the Witten-type spinor defined in (\ref{eq:dwde-ws}) with respect to $\lambda_j$ (and corresponding $\kappa_j$).
By definition, we can find
\begin{equation*}
  \eta_j \in \rW^{1,2}_{-(n-2)/2}(\sfE,\rho \dv, \opP),\quad \dwDE(\eta_j) = -\xi = - \wDE(\Psi^{\sfE}_{\infty}),\quad \psi_j = \eta_j + \Psi^{\sfE}_{\infty},
\end{equation*}
where $\Mm$ also depends on $\lambda_j$.

By (\ref{eq:dwde-est0}) and our choice of $\lambda,\kappa$, we have
\begin{equation*}
  \norm{\wDE(\Psi^{\sfE}_{\infty})}^2_{\rL^2(M_0,\rho\dv)} = \norm{\dwDE \eta_j}^2_{\rL^2(\rho)}  \ge \frac{1}{4C_2}\int_{M_0}|\nabla|\eta_j||^2 \rho \dv.
\end{equation*}
Then, combining the above estimate with (\ref{eq:bd-wp-ineq}),
\begin{equation*}
  4C_2(C_3 + 1)\norm{\wDE(\Psi^{\sfE}_{\infty})}^2_{\rL^2(M_0,\rho\dv)} \ge \norm{\eta_j}^2_{\rW^{1,2}_{-(n-2)/2}(M_0, \rho\dv)}.
\end{equation*}
Since the left hand side of the above estimate is a constant, by the weighted Rellich–Kondrachov compact embedding theorem,~\cite[Proposition~2.8]{Cecchini_2024po} or~\cite[Theorem~A.26]{Lee_2019ge}, we can find a subsequence of $\aset{\eta_j}$ converging in $\rL^2_{-(n-3)/2}(M_0, \rho\dv)$.
Without loss of generality, we assume that there exists $\eta_0\in \rL^2_{-(n-3)/2}(M_0, \rho\dv)$ such that
\begin{equation*}
  \lim_{j\rightarrow \infty} \norm{\eta_j - \eta_0}_{\rL^2_{-(n-3)/2}(M_0, \rho\dv)} = 0.
\end{equation*}
For $f\in \rL^2_{-(n-3)/2}(M_0, \rho\dv)$, we define a functional $\opB$ on $\rL^2_{-(n-3)/2}(M_0, \rho\dv)$ as follows,
\begin{equation*}
  \opB(f) \coloneqq \int_{M_0} Q |\Psi^{\sfE}_{\infty} + f|^2 \rho \dv.
\end{equation*}
By the definition of $Q$, (\ref{eq:def-Q}), $\opB$ is a continuous functional on $\rL^2_{-(n-3)/2}(M_0, \rho\dv)$.
As a result,
\begin{equation*}
  \opB(\eta_0) = \int_{M_0} Q |\Psi^{\sfE}_{\infty} + \eta_0|^2 \rho \dv = \lim_{j\rightarrow \infty} \opB(\eta_j).
\end{equation*}
Since $\Psi^{\sfE}_{\infty} \notin \rL^2_{-(n-3)/2}(M_0, \rho\dv)$, $\Psi^{\sfE}_{\infty} + \eta_0$ must be non-zero; thus $\opB(\eta_0) > 0$.
Then, by (\ref{eq:est-ab}),
\begin{multline*}
  \frac{(n-1)\omega_{n-1}}{c_0C_2}((n-1)\alpha + 2\beta) \ge \lim_{j \rightarrow \infty}- 4C_0C_2C_3 \lambda_j \norm{\wDE \Psi^{\sfE}_{\infty}}^2_{\rL^2(M_0,\rho\dv)} + \overline{\lim_{j \rightarrow \infty}}\frac{1}{2c_0C_2}\opQ(|\phi_{1}\psi_j|)\\
  \ge \frac{1}{2c_0C_2} \lim_{j \rightarrow \infty} \opB(\eta_j) = \frac{1}{2c_0C_2}\opB(\eta_0) > 0.
\end{multline*}
The proof of Theorem~\ref{thm:bw} is complete.

\begin{remark}
  \label{rk:def-Q}
As is clear from the proof of Theorem~\ref{thm:bw}, to establish $(n-1)\alpha + 2\beta > 0$, it in fact suffices to assume that $Q \ge 0$ and that $Q$ satisfies (\ref{eq:def-Q}).
  Notably, this condition is also sufficient for the proof given in~\cite{Brendle_2026dh}.
\end{remark}

\bibliographystyle{amsplain}

\begin{bibdiv}
\begin{biblist}

\bib{Bei_2026ge}{article}{
      author={Bei, Francesco},
      author={Cecchini, Simone},
       title={Geometric rigidity via harmonic twisted spinors},
        date={2026},
      eprint={2606.19567v1},
         url={https://arxiv.org/abs/2606.19567v1},
}

\bib{Bi_2026pr}{article}{
      author={Bi, Yuchen},
      author={Hao, Tianze},
      author={He, Shihang},
      author={Shi, Yuguang},
      author={Zhu, Jintian},
       title={A proof for the {R}iemannian positive mass theorem up to
  dimension 19},
        date={2026},
      eprint={2603.02769v2},
         url={https://arxiv.org/abs/2603.02769v2},
}

\bib{Bismut_1991aa}{article}{
      author={Bismut, Jean-Michel},
      author={Lebeau, Gilles},
       title={Complex immersions and {Q}uillen metrics},
        date={1991},
        ISSN={0073-8301},
     journal={Inst. Hautes {\'E}tudes Sci. Publ. Math.},
      number={74},
       pages={ii+298 pp. (1992)},
         url={http://mathscinet.ams.org/mathscinet-getitem?mr=1188532},
      review={\MR{1188532}},
}

\bib{Balfauf_2022sm}{article}{
      author={Baldauf, Julius},
      author={Ozuch, Tristan},
       title={Spinors and mass on weighted manifolds},
        date={2022},
        ISSN={0010-3616},
     journal={Comm. Math. Phys.},
      volume={394},
      number={3},
       pages={1153\ndash 1172},
         url={https://mathscinet.ams.org/mathscinet-getitem?mr=4470247},
      review={\MR{4470247}},
}

\bib{Brendle_2026dh}{article}{
      author={Brendle, S.},
      author={Wang, Y.},
       title={A dimension descent scheme for the positive mass theorem in
  arbitrary dimension},
        date={2026},
      eprint={2604.08473v2},
         url={http://arxiv.org/abs/2604.08473v2},
}

\bib{Calderbank_2000re}{article}{
      author={Calderbank, David M.~J.},
      author={Gauduchon, Paul},
      author={Herzlich, Marc},
       title={Refined {K}ato inequalities and conformal weights in {R}iemannian
  geometry},
        date={2000},
        ISSN={0022-1236,1096-0783},
     journal={J. Funct. Anal.},
      volume={173},
      number={1},
       pages={214\ndash 255},
         url={https://mathscinet.ams.org/mathscinet-getitem?mr=1760284},
      review={\MR{1760284}},
}

\bib{Cecchini_2024po}{article}{
      author={Cecchini, Simone},
      author={Zeidler, Rudolf},
       title={The positive mass theorem and distance estimates in the spin
  setting},
        date={2024},
        ISSN={0002-9947,1088-6850},
     journal={Trans. Amer. Math. Soc.},
      volume={377},
      number={8},
       pages={5271\ndash 5288},
         url={https://mathscinet.ams.org/mathscinet-getitem?mr=4771222},
      review={\MR{4771222}},
}

\bib{Chu_2024no}{article}{
      author={Chu, Jianchun},
      author={Zhu, Jintian},
       title={A non-spin method to the positive weighted mass theorem for
  weighted manifolds},
        date={2024},
        ISSN={1050-6926,1559-002X},
     journal={J. Geom. Anal.},
      volume={34},
      number={9},
       pages={Paper No. 272, 31},
         url={https://mathscinet.ams.org/mathscinet-getitem?mr=4765850},
      review={\MR{4765850}},
}

\bib{Davaux_2003op}{article}{
      author={Davaux, H\'{e}l\`ene},
       title={An optimal inequality between scalar curvature and spectrum of
  the {L}aplacian},
        date={2003},
        ISSN={0025-5831},
     journal={Math. Ann.},
      volume={327},
      number={2},
       pages={271\ndash 292},
         url={https://mathscinet.ams.org/mathscinet-getitem?mr=2015070},
      review={\MR{2015070}},
}

\bib{Gilbarg_2001el}{book}{
      author={Gilbarg, David},
      author={Trudinger, Neil~S.},
       title={Elliptic partial differential equations of second order},
      series={Classics in Mathematics},
   publisher={Springer-Verlag, Berlin},
        date={2001},
        ISBN={3-540-41160-7},
         url={https://mathscinet.ams.org/mathscinet-getitem?mr=1814364},
        note={Reprint of the 1998 edition},
      review={\MR{1814364}},
}

\bib{Lee_2019ge}{book}{
      author={Lee, Dan~A.},
       title={Geometric relativity},
      series={Graduate Studies in Mathematics},
   publisher={American Mathematical Society, Providence, RI},
        date={2019},
      volume={201},
        ISBN={978-1-4704-5081-6},
         url={https://mathscinet.ams.org/mathscinet-getitem?mr=3970261},
      review={\MR{3970261}},
}

\bib{Su_2022no}{article}{
      author={Su, Guangxiang},
      author={Wang, Xiangsheng},
      author={Zhang, Weiping},
       title={Nonnegative scalar curvature and area decreasing maps on complete
  foliated manifolds},
        date={2022},
        ISSN={0075-4102},
     journal={J. Reine Angew. Math.},
      volume={790},
       pages={85\ndash 113},
         url={https://mathscinet.ams.org/mathscinet-getitem?mr=4472869},
      review={\MR{4472869}},
}

\bib{Schoen_1979pp}{article}{
      author={Schoen, Richard},
      author={Yau, Shing~Tung},
       title={On the proof of the positive mass conjecture in general
  relativity},
        date={1979},
        ISSN={0010-3616},
     journal={Comm. Math. Phys.},
      volume={65},
      number={1},
       pages={45\ndash 76},
         url={https://mathscinet.ams.org/mathscinet-getitem?mr=526976},
      review={\MR{526976}},
}

\bib{Witten_1981ne}{article}{
      author={Witten, Edward},
       title={A new proof of the positive energy theorem},
        date={1981},
        ISSN={0010-3616,1432-0916},
     journal={Comm. Math. Phys.},
      volume={80},
      number={3},
       pages={381\ndash 402},
         url={https://mathscinet.ams.org/mathscinet-getitem?mr=626707},
      review={\MR{626707}},
}

\bib{Wang_2026lm}{article}{
      author={Wang, Jian},
      author={Wang, Jinmin},
      author={Xie, Zhizhang},
       title={${L}^\infty$-metrics on tori and {S}choen's conjecture},
        date={2026},
      eprint={2606.21325v1},
         url={https://arxiv.org/abs/2606.21325v1},
}

\end{biblist}
\end{bibdiv}

\end{document}